\documentclass[11pt]{amsart}

\usepackage{amsfonts}
\usepackage{amscd}
\usepackage{amssymb}
\usepackage{amsthm}
\usepackage{amsmath}
\usepackage{comment}
\usepackage{epsfig}
\usepackage[all]{xy}
\usepackage[pdftex]{color}

\usepackage[colorlinks=true, pdfstartview=FitV, linkcolor=blue, citecolor=blue, urlcolor=blue]{hyperref}
\newcommand{\arxiv}[1]{\href{http://arxiv.org/abs/#1}{\tt arXiv:\nolinkurl{#1}}}

\addtolength{\hoffset}{-1.5cm}
\addtolength{\textwidth}{3cm}

\newtheorem{Theorem}{Theorem}[section]
\newtheorem{Proposition}[Theorem]{Proposition}

\theoremstyle{definition}
\newtheorem{Example}[Theorem]{Example}
\newtheorem{Comment}[Theorem]{Comment}

\newtheorem{Definition}[Theorem]{Definition}

\def\asl{\widehat{\mathfrak{sl}}}
\def\g{\frak{g}}
\newcommand{\bz}{\Bbb{Z}}
\newcommand{\bc}{\mathbb{C}}

\newcommand{\agl}{\widehat{\mathfrak{gl}}}

\newcommand{\Hi}{{\bold H}}
\newcommand{\Cl}{{\bold Cl}}
\newcommand{\Fock}{{\bf F}}
\newcommand{\Bock}{{\bf B}}

\newcommand{\ch}{\mathrm{ch}}
\newcommand{\cW}{\mathcal{W}}

\def\fgl{\mathfrak{gl}}
\def\fsl{\mathfrak{sl}}

\def\End{\mathrm{End}}

\makeatletter
\renewcommand{\@makefnmark}{\mbox{\textsuperscript{}}}
\makeatother

\title{Notes on Fock space}

\author{Peter Tingley}

\address{Peter Tingley,
Dept. of Mathematics and Statistics,
Loyola University Chicago.}
\email{ptingley@luc.edu}

\date{\today}

\makeatletter
\def\@tocline#1#2#3#4#5#6#7{\relax
  \ifnum #1>\c@tocdepth 
  \else
    \par \addpenalty\@secpenalty\addvspace{#2}%
    \begingroup \hyphenpenalty\@M
    \@ifempty{#4}{%
      \@tempdima\csname r@tocindent\number#1\endcsname\relax
    }{%
      \@tempdima#4\relax
    }%
    \parindent\z@ \leftskip#3\relax \advance\leftskip\@tempdima\relax
    \rightskip\@pnumwidth plus4em \parfillskip-\@pnumwidth
    #5\leavevmode\hskip-\@tempdima
      \ifcase #1
       \or\or \hskip 1em \or \hskip 2em \else \hskip 3em \fi%
      #6\nobreak\relax
    \hfill\hbox to\@pnumwidth{\@tocpagenum{#7}}\par
    \nobreak
    \endgroup
  \fi}
\makeatother

\begin{document}

\begin{abstract}
These notes are intended as a fairly self-contained explanation of Fock space and various 
algebras that act on it, including a Clifford algebra, a Weyl algebra, an infinite rank matrix algebra, and an affine Kac-Moody algebra. We also discuss how the various algebras are related, and in particular describe the celebrated boson-fermion correspondence. We finish by briefly discussing a deformation of Fock space, which is a representation for the quantized universal enveloping algebra $U_q(\asl_\ell)$. 
\end{abstract}

\maketitle

\tableofcontents

\section{Introduction}

The term Fock space comes from particle physics, where it is the state space for a system of a variable number of elementary particles. There are two distinct types of elementary particles, bosons and fermions, and their Fock spaces look quite different. Fermionic Fock spaces are naturally representations of a Clifford algebra, where the generators correspond to adding/removing a particle in a given pure energy state. Bosonic Fock space is naturally a representation of a Weyl algebra. 

Here we focus on one example of this construction each for Bosons and Fermions. Our fermionic Fock space $\Fock$ corresponds to a system of fermionic particles with pure energy states indexed by $\bz+\frac12$. Our space $\Bock^{(0)}$ corresponds to a system of bosons with pure energy states indexed by $\bz_{>0}$, and our full bosonic Fock space $\Bock$ is a superposition of $\bz$ shifted copies of $\Bock^{(0)}$. There is a natural embedding of the Weyl algebra for $\Bock^{(0)}$ into a completion of the Clifford algebra for $\Fock$, which leads to the celebrated boson-fermion correspondence.

We begin by discussing $\Fock$ and $\Bock$ as vector spaces. We present various ways of indexing the standard basis of $\Fock$, and describe the boson-fermion correspondence as a vector space isomorphism between $\Fock$ and $\Bock$. Since $\Fock$ and $\Bock$ are both vector spaces with countable bases it is immediate that they are isomorphic, the isomorphism we present is only interesting in that it has nice properties with respect to the actions of the Clifford and Weyl algebras. A more intuitive development of the theory (and also how it is presented in \cite[Chapter 14]{Kac}) is probably to first notice the relationship between the Clifford and Weyl algebras, then derive the corresponding relationship between the vector spaces. However, we find it useful to first have the map explicitly described in an elementary way.

We next discuss the various algebras that naturally act on Fock space, and how these actions are related. These include an infinite rank matrix algebra and various affine Kac-Moody algebras, as well as the Weyl and Clifford algebras. We finish by presenting the Misra-Miwa action of the quantized universal enveloping algebra $U_q(\asl_\ell)$ on $\Fock \otimes_\bc \bc[q,q^{-1}].$ Understanding the relationship between the actions of these various algebras on $\Fock$ has proven very useful, see e.g. \cite[Chapter 14]{Kac}. It is really this representation theory that we are interested in, not the physics motivations. There will be very little physics beyond this introduction.

These notes are intended to be quite self-contained in the sense that they should be comprehensible independent of other references on Fock space. We do however refer to other sources for some important proofs. Much of our presentation loosely follows \cite[Chapter 14]{Kac}. Alternate references include \cite{DJM,KRR}. 

\subsection{Acknowledgments}

This notes were one consequence of a long series of discussions Arun Ram, so I would like to thank Arun for being so generous with his time. I would also like to thank Antoine Labelle, Tony Giaquinto, Gus Schrader and A.J. Tolland for helpful comments. Finally, I would like to thank my master's advisor Yuly Billig for having me read \cite{Kac} and all his help as I tried to understand it. 

\section{Fock space as a vector space}  \label{F_vect}

Fermionic Fock space $\Fock$ is an infinite dimensional vector space. It has a standard basis, which can be indexed by a variety of objects. In this section we discuss indexings by Maya diagrams, charged partitions, and normally ordered wedge products. 
Bosonic Fock space $\Bock$ is essentially a space of polynomials in infinitely many variables. This has a standard basis consisting of character polynomials, which are related to Schur symmetric functions. 
Schur symmetric functions are indexed by partitions, which leads to a natural bijection between the standard bases of $\Fock$ and $\Bock$, and hence a vector space isomorphism.

\subsection{Maya diagrams}

\begin{Definition} \label{Maya_def}
A \emph{Maya diagram} is a placement of a white or black bead at each position in $\bz+\frac12$, subject to the condition that all but finitely many positions $m<0$ have a black bead and all but finitely many positions $m>0$ have a white bead. For instance, 
\setlength{\unitlength}{0.4cm}
\begin{center}
\begin{picture}(30,0.3)

\put(2.5,0){\ldots}
\put(26,0){\ldots}

\put(4.5,0){\circle{0.5}}
\put(5.5,0){\circle{0.5}}
\put(6.5,0){\circle{0.5}}
\put(7.5,0){\circle{0.5}}
\put(8.5,0){\circle{0.5}}
\put(9.5,0){\circle{0.5}}
\put(10.5,0){\circle{0.5}}
\put(11.5,0){\circle{0.5}}
\put(12.5,0){\circle*{0.5}}
\put(13.5,0){\circle{0.5}}
\put(14.5,0){\circle*{0.5}}

\put(15,-0.5){\line(0,1){1}}

\put(15.5,0){\circle*{0.5}}
\put(16.5,0){\circle{0.5}}
\put(17.5,0){\circle{0.5}}
\put(18.5,0){\circle*{0.5}}
\put(19.5,0){\circle*{0.5}}
\put(20.5,0){\circle{0.5}}
\put(21.5,0){\circle*{0.5}}
\put(22.5,0){\circle*{0.5}}
\put(23.5,0){\circle*{0.5}}
\put(24.5,0){\circle*{0.5}}
\put(25.5,0){\circle*{0.5}}

\put(14.8,-1){\tiny{0}}
\put(13.8,-1){\tiny{1}}
\put(12.8,-1){\tiny{2}}
\put(11.8,-1){\tiny{3}}
\put(10.8,-1){\tiny{4}}
\put(9.8,-1){\tiny{5}}
\put(8.8,-1){\tiny{6}}
\put(7.8,-1){\tiny{7}}
\put(6.8,-1){\tiny{8}}
\put(5.8,-1){\tiny{9}}
\put(4.6,-1){\tiny{10}}
\put(3.6,-1){\tiny{11}}

\put(15.6,-1){\tiny{-1}}
\put(16.6,-1){\tiny{-2}}
\put(17.6,-1){\tiny{-3}}
\put(18.6,-1){\tiny{-4}}
\put(19.6,-1){\tiny{-5}}
\put(20.6,-1){\tiny{-6}}
\put(21.6,-1){\tiny{-7}}
\put(22.6,-1){\tiny{-8}}
\put(23.6,-1){\tiny{-9}}
\put(24.4,-1){\tiny{-10}}
\put(25.4,-1){\tiny{-11}}

\end{picture}
\end{center}
\vspace{0.4cm}
It is sometimes convenient to think of the black beads as ``filled positions" and the white beads as ``empty positions" in a ``Dirac sea." 
Note that we label positions from right to left. 
This is to match the conventions for normally-ordered semi-infinite wedges, where the indices decrease to the right (see \S\ref{wedge_section} and \cite[\S14.9]{Kac}). 
\end{Definition}

\subsection{Charged partitions} \label{CP_section}

\begin{Definition}
A {\bf charged partition}  is a pair $(\lambda, h)$ consisting of a partition $\lambda$ and an integer $h$ (the charge). 
\end{Definition}

There is a natural bijection between Maya diagrams and charged partitions: draw a line above the Maya diagram by placing a segment sloping down and to the right over every white bead, and a segment sloping up and to the right over every black bead. See Figure \ref{partition_bij}. The result is the outer boundary of a partition, where by convention the parts of the partition are the lengths of the ``rows" of boxes going up and to the left. The charge is determined by superimposing axes with the origin above position $0$ in the Maya diagram, and so that far to the right the axis follows the diagram. The charge is the signed distance between the diagram and the axis far to the left. Equivalently, it is the number under the vertex of the partition.

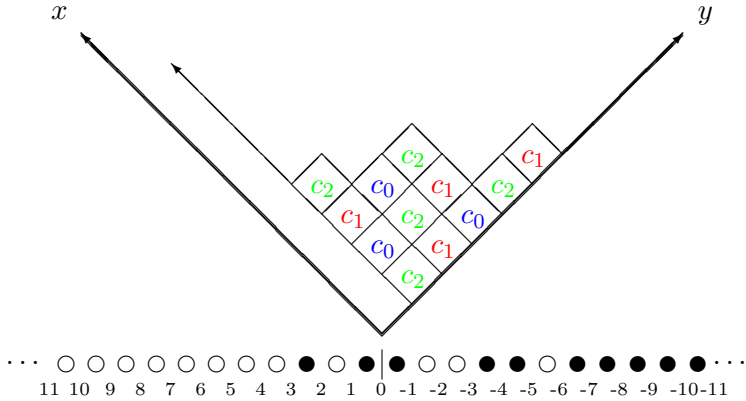
\begin{figure}

\setlength{\unitlength}{0.4cm}
\begin{center}
\begin{picture}(30,12)

\put(2.5,0){\ldots}
\put(26,0){\ldots}

\put(4.5,0){\circle{0.5}}
\put(5.5,0){\circle{0.5}}
\put(6.5,0){\circle{0.5}}
\put(7.5,0){\circle{0.5}}
\put(8.5,0){\circle{0.5}}
\put(9.5,0){\circle{0.5}}
\put(10.5,0){\circle{0.5}}
\put(11.5,0){\circle{0.5}}
\put(12.5,0){\circle*{0.5}}
\put(13.5,0){\circle{0.5}}
\put(14.5,0){\circle*{0.5}}

\put(15,-0.5){\line(0,1){1}}

\put(15.5,0){\circle*{0.5}}
\put(16.5,0){\circle{0.5}}
\put(17.5,0){\circle{0.5}}
\put(18.5,0){\circle*{0.5}}
\put(19.5,0){\circle*{0.5}}
\put(20.5,0){\circle{0.5}}
\put(21.5,0){\circle*{0.5}}
\put(22.5,0){\circle*{0.5}}
\put(23.5,0){\circle*{0.5}}
\put(24.5,0){\circle*{0.5}}
\put(25.5,0){\circle*{0.5}}

\put(9,9){\vector(-1,1){1}}
\put(9,9){\line(1,-1){1}}
\put(10,8){\line(1,-1){1}}
\put(11,7){\line(1,-1){1}}
\put(12,6){\line(1,1){1}}
\put(13,7){\line(1,-1){1}}
\put(14,6){\line(1,1){1}}
\put(15,7){\line(1,1){1}}
\put(16,8){\line(1,-1){1}}
\put(17,7){\line(1,-1){1}}
\put(18,6){\line(1,1){1}}
\put(19,7){\line(1,1){1}}
\put(20,8){\line(1,-1){1}}
\put(21,7){\line(1,1){1}}
\put(22,8){\line(1,1){1}}
\put(23,9){\line(1,1){1}}

\put(15,0.95){\line(1,1){10}}
\put(15,1){\vector(1,1){10}}
\put(15,1.05){\line(1,1){10}}
\put(4, 11.5){$x$}

\put(15,0.95){\line(-1,1){10}}
\put(15,1){\vector(-1,1){10}}
\put(15,1.05){\line(-1,1){10}}
\put(25.5, 11.5){$y$}

\put(16,2){\line(-1,1){4}}
\put(17,3){\line(-1,1){3}}
\put(18,4){\line(-1,1){3}}
\put(19,5){\line(-1,1){1}}
\put(20,6){\line(-1,1){1}}

\put(15,3){\line(1,1){3}}
\put(14,4){\line(1,1){3}}
\put(13,5){\line(1,1){1}}

\put(14.8,-1){\tiny{0}}
\put(13.8,-1){\tiny{1}}
\put(12.8,-1){\tiny{2}}
\put(11.8,-1){\tiny{3}}
\put(10.8,-1){\tiny{4}}
\put(9.8,-1){\tiny{5}}
\put(8.8,-1){\tiny{6}}
\put(7.8,-1){\tiny{7}}
\put(6.8,-1){\tiny{8}}
\put(5.8,-1){\tiny{9}}
\put(4.6,-1){\tiny{10}}
\put(3.6,-1){\tiny{11}}

\put(15.6,-1){\tiny{-1}}
\put(16.6,-1){\tiny{-2}}
\put(17.6,-1){\tiny{-3}}
\put(18.6,-1){\tiny{-4}}
\put(19.6,-1){\tiny{-5}}
\put(20.6,-1){\tiny{-6}}
\put(21.6,-1){\tiny{-7}}
\put(22.6,-1){\tiny{-8}}
\put(23.6,-1){\tiny{-9}}
\put(24.6,-1){\tiny{-10}}
\put(25.6,-1){\tiny{-11}}

\put(12.3,5.7){{ \color{green} $c_2$}}
\put(13.3,4.7){{ \color{red} $c_1$}}
\put(14.3,3.7){{ \color{blue} $c_0$}}
\put(14.3,5.7){{ \color{blue} $c_0$}}
\put(15.3,6.7){{ \color{green} $c_2$}}
\put(15.3,4.7){{ \color{green} $c_2$}}
\put(15.3,2.7){{ \color{green} $c_2$}}
\put(16.3,3.7){{ \color{red} $c_1$}}
\put(16.3,5.7){{ \color{red} $c_1$}}
\put(17.3,4.7){{ \color{blue} $c_0$}}
\put(18.3,5.7){{ \color{green} $c_2$}}
\put(19.3,6.7){{ \color{red} $c_1$}}

\end{picture}
\end{center}

\vspace{0.4cm}

\caption{The bijection between Maya diagrams and charged partitions from \S\ref{CP_section}. The black beads in the Maya diagram correspond to the positions where the rim of the partition slopes up and to the right. The parts of $\lambda$ are the lengths of all the finite ``rows" of boxes sloping up at to the left, so here $\lambda=(4,3,3,1,1)$. The charge is the horizontal position of the vertex of the partition, so here the charge is $h=-1$. Later on in \S\ref{F_aff} we will choose a ``level" $\ell$ and color the squares of $\lambda$ with $c_{\bar i}$ for residues $\bar i$ mod $\ell$, where the color of a box $b$ is the position of that box in the horizontal direction mod $\ell$.\label{partition_bij}}
\end{figure}

\subsection{Semi-infinite wedges} \label{wedge_section}

\begin{Definition}
Let $V_{\bz+\frac12}$ be the $\bc$-vector space with basis $\{e_m \}_{m \in \bz+\frac12}$.
\end{Definition}

\begin{Definition}
A {semi-infinite wedge} is a (formal) infinite wedge $e_{m_1} \wedge e_{m_2} \wedge e_{m_3} \wedge \cdots$. This can be thought of as living inside a formal semi-infinite wedge space  $V_{\bz+\frac12} \wedge V_{\bz+\frac12} \wedge V_{\bz+\frac12} \cdots$. 
\end{Definition}

As usual, ``wedge" is anti-commutative. So, for example,
$$e_{2.5} \wedge e_{0.5} \wedge e_{-1.5} \wedge e_{-2.5} \cdots =  -e_{0.5} \wedge e_{2.5} \wedge e_{-1.5} \wedge e_{-2.5} \cdots $$

\begin{Definition}
A semi-infinite wedge $e_{m_1} \wedge e_{m_2} \wedge e_{m_3} \wedge \cdots$ is called \emph{ normally ordered} if $m_1 > m_2 >m_3 > \cdots$.  
\end{Definition}

\begin{Definition}
A semi-infinite wedge $e_{m_1} \wedge e_{m_2} \wedge e_{m_3} \wedge \cdots$  is called \emph{regular} if $m_{k+1} = m_k-1$ for all sufficiently large $k$.
\end{Definition}

There is a bijection between regular normally ordered semi-infinite wedges and Maya diagrams which takes $e_{m_1} \wedge e_{m_2} \wedge \cdots$ to the Maya diagram with black beads exactly in positions $m_1, m_2, \ldots$. For instance, 
\begin{equation}
 e_{2.5} \wedge e_{0.5} \wedge e_{-0.5} \wedge e_{-3.5} \wedge e_{-4.5} \wedge e_{-6.5} \wedge e_{-7.5} \wedge e_{-8.5} \cdots
 \end{equation}
corresponds to the Maya diagram shown in Definition \ref{Maya_def} and Figure \ref{partition_bij}. 

\subsection{Fermionic Fock space $\Fock$}
The fermionic Fock space $\Fock$ is the ${\Bbb C}$-vector space with basis indexed by any one of the following:
\begin{enumerate}

\item Maya Diagrams.

\item Charged partitions.

\item Regular normally-ordered semi-infinite wedges.
\end{enumerate}
As in \S\ref{CP_section} and \S\ref{wedge_section} these three sets are naturally in bijection, and we take the point of view that they all index the same basis of $\Fock$. We most often refer to a basis element using the corresponding charged partition, using the notation $|\lambda,h \rangle$ to denote the standard basis element of $F$ corresponding to the charged partition $(\lambda,h)$.

The \emph{charge} $h$ part of $\Fock$ is
\begin{equation*}
\Fock^{(h)}:= span \{\text{charged partitions with charge } h\}.
\end{equation*}
In particular, ordinary partitions index a basis for $\Fock^{(0)}$.
As a vector space, $\Fock= \oplus_{h \in \bz} \Fock^{(h)}$.

\subsection{Bosonic Fock space $\Bock$} \label{ss:BF}
The \emph{bosonic Fock space} $\Bock$ is 
\begin{equation*}
\Bock:= \bc[x_1,x_2, x_3, \ldots; q,q^{-1}].
\end{equation*}
The \emph{charge} $h$ part of $\Bock$ is 
\begin{equation*}
\Bock^{(h)}:= q^h \bc[x_1, x_2, \ldots].\end{equation*}
As vector spaces, $\Bock = \oplus_{h \in \bz} \Bock^{(h)}$. 

\subsection{The boson-fermion correspondence as a map of vector spaces} \label{BF-vect}

\begin{Definition}
Fix a partition $\lambda$. A \emph{column-strict filling} of $\lambda$ is a function $t$ from the set of boxes in the diagram of $\lambda$ to $\bz_{>0}$ which is weakly increasing along rows and strictly increasing along columns. See Figure \ref{semi-standard}.
\end{Definition}

\begin{figure}

\setlength{\unitlength}{0.4cm}
\begin{center}
\begin{picture}(30,8)

\put(15,0){\vector(1,1){6}}
\put(15,0){\vector(-1,1){6}}

\put(16,1){\line(-1,1){4}}
\put(17,2){\line(-1,1){2}}
\put(18,3){\line(-1,1){2}}
\put(19,4){\line(-1,1){1}}

\put(14,1){\line(1,1){4}}
\put(13,2){\line(1,1){3}}
\put(12,3){\line(1,1){1}}
\put(11,4){\line(1,1){1}}

\put(14.8,0.6){$1$}
\put(13.8,1.6){$1$}
\put(12.8,2.6){$3$}
\put(11.8,3.6){$3$}
\put(15.8,1.6){$3$}
\put(14.8,2.6){$3$}
\put(16.8,2.6){$4$}
\put(15.8,3.6){$5$}
\put(17.8,3.6){$5$}

\end{picture}
\end{center}

\caption{\label{semi-standard} A column-strict filling of $\lambda=(4,2,2,1)$. The function $t$ takes $b$ to the integer in box $b$. Recall that ``rows" slope up and to the left, and ``columns" slope up and to the right.}
\end{figure}
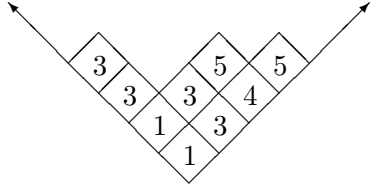

\begin{Definition} Let $\lambda$ be a partition. The \emph{Schur symmetric function} $s_{\lambda}$ in infinitely many variables $y_1, y_2, \ldots$ corresponding to $\lambda$ is
\begin{equation}
s_{\lambda}({\bf y}) := \sum_{\tiny \begin{array}{c} t \text{ a column strict} \\ \text{filling of } \lambda \end{array}} \prod_{\text{boxes } b \text{ of } \lambda} y_{t(b)}. 
\end{equation}
\end{Definition}

\begin{Comment} 
The Schur symmetric functions are symmetric in the sense that they are invariant under permutations of the variables $y_i$ (this can be shown using combinatorics \cite[Theorem 7.10.2]{Sta}, or by appealing to representation theory \cite[\S I3)]{Mac}). They are polynomials in the sense that, if all but finitely many of the variables are set to 0, the result is a polynomial. They have many important properties, most of which arise because (once all but the first $n$ variables are set to 0) they are the characters of irreducible representations of $\fgl_n$. 
\end{Comment}

\begin{Definition}
For $k \in {\Bbb Z}_{>0}$, let $p_k= y_1^k+y_2^k+\cdots$ be the $k^{th}$ power symmetric function.
\end{Definition}

\begin{Definition}
The character polynomial $\chi_{\lambda}$ is the unique polynomial such that 
\begin{equation*}
\chi_{\lambda}(p_1,\frac{1}{2} p_2, \frac{1}{3} p_3, \frac{1}{4} p_4 \ldots)= s_{\lambda}.
\end{equation*}
\end{Definition}

\begin{Comment}
To find $\chi_{\lambda}$, one may set $y_j=0$ for all $j$ larger than the longest column of $\lambda$. As well, $\chi_{\lambda}$ cannot depend on $p_k$ for any $k $ bigger than the number of boxes in $\lambda$ (since $s_\lambda$ is homogeneous of degree $<k$), so finding any given $\chi_{\lambda}$ is a finite problem.
\end{Comment}

\begin{Proposition} \label{Fock-sym} 
There is an isomorphism of vector spaces $\sigma: \Fock \rightarrow \Bock$ given by 
$$\sigma(|\lambda,h \rangle)= q^h \chi_{\lambda}(x_1, x_2, \ldots)$$
for any charged partition $(\lambda, h)$. 
This restricts to an isomorphism $\Fock^{(h)} \rightarrow B^{(h)}$ for all $h \in \bz$.
\end{Proposition}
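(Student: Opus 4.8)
The plan is to reduce the statement to the single fact that the character polynomials $\{\chi_\lambda\}_\lambda$ form a $\bc$-basis of the polynomial ring $\bc[x_1, x_2, \ldots]$. Granting this, the argument is purely formal. As a vector space $\Bock = \bc[x_1, x_2, \ldots; q, q^{-1}]$ decomposes as $\bigoplus_{h \in \bz} q^h\,\bc[x_1, x_2, \ldots]$, and multiplication by $q^h$ is a linear isomorphism $\bc[x_1, x_2, \ldots] \xrightarrow{\sim} q^h\,\bc[x_1, x_2, \ldots] = \Bock^{(h)}$; hence $\{q^h \chi_\lambda\}_\lambda$ is a basis of $\Bock^{(h)}$ for each fixed $h$, and the union over $h$ is a basis of $\Bock$. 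On the fermionic side, the charged partitions $(\lambda,h)$ index the standard basis of $\Fock$, with those of fixed charge $h$ indexing $\Fock^{(h)}$. Thus $\sigma$ carries the standard basis of $\Fock$ bijectively onto a basis of $\Bock$, which is exactly what it means for $\sigma$ to be a vector space isomorphism, and restricting to a fixed charge yields $\Fock^{(h)} \xrightarrow{\sim} \Bock^{(h)}$.

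Everything therefore hinges on showing that $\{\chi_\lambda\}_\lambda$ is a basis of $\bc[x_1, x_2, \ldots]$, and here I would pass through the ring of symmetric functions. Let $\Lambda_\bc$ denote the ring of symmetric functions over $\bc$ in the variables $y_1, y_2, \ldots$. I would invoke two standard facts (see e.g. \cite{Mac} or \cite{Sta}): first, that the power sums $p_1, p_2, \ldots$ are algebraically independent and generate $\Lambda_\bc$, so that $\Lambda_\bc = \bc[p_1, p_2, \ldots]$; and second, that the Schur functions $\{s_\lambda\}_\lambda$ form a $\bc$-basis of $\Lambda_\bc$.

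Using the first fact, the rescaled power sums $\tfrac1k p_k$ are again a free polynomial generating set, so there is a well-defined $\bc$-algebra isomorphism $\phi \colon \bc[x_1, x_2, \ldots] \xrightarrow{\sim} \Lambda_\bc$ determined by $x_k \mapsto \tfrac1k p_k$. By the defining property of the character polynomial, $\phi(\chi_\lambda) = \chi_\lambda(p_1, \tfrac12 p_2, \tfrac13 p_3, \ldots) = s_\lambda$; that is, $\phi$ carries $\{\chi_\lambda\}_\lambda$ onto $\{s_\lambda\}_\lambda$. Since $\phi$ is a linear isomorphism and $\{s_\lambda\}_\lambda$ is a basis of $\Lambda_\bc$ by the second fact, the preimage $\{\chi_\lambda\}_\lambda$ must be a $\bc$-basis of $\bc[x_1, x_2, \ldots]$, completing the reduction.

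I do not expect a genuine obstacle: once the two cited structural properties of $\Lambda_\bc$ are in hand, the proof is bookkeeping. The only point deserving attention is that the characteristic-zero hypothesis is essential for the power sums to generate $\Lambda_\bc$ (and hence for $\phi$ to be an isomorphism); over $\bz$ this would fail. It is worth noting that the well-definedness of $\chi_\lambda$ asserted in its definition — the existence and uniqueness of the expression of $s_\lambda$ in the $p_k$ — is itself just the first cited fact, so the entire argument rests on that single result about the structure of $\Lambda_\bc$.
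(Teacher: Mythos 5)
Your proof is correct and follows essentially the same route as the paper: charged partitions form the standard basis of $\Fock$ by definition, Schur functions form a basis of symmetric functions (citing \cite{Mac}), hence $\{q^h\chi_\lambda\}$ is a basis of $\Bock$, and a map carrying a basis bijectively to a basis is an isomorphism. The only difference is that you make explicit the step the paper compresses into ``from which it follows'' --- namely the algebra isomorphism $x_k \mapsto \tfrac1k p_k$ resting on the algebraic independence of the power sums in characteristic zero, which is also what makes $\chi_\lambda$ well defined --- a worthwhile clarification but not a different argument.
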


\begin{proof}
As in \cite[\S I3]{Mac}, Schur polynomials are a basis for the space of all symmetric functions in infinitely many variables, from which it follows that $\{ q^h \chi_{\lambda} \}$ is a basis for $\Bock$. By definition, the set of all charged partitions is a basis for $\Fock$. Thus each restriction $\sigma : \Fock^{(h)} \rightarrow \Bock^{(h)}$ is a vector space isomorphism, and so is the full map $\sigma: \Fock^{(h)} \rightarrow \Bock^{(h)}$. 
\end{proof}

\subsection{The inner product} \label{form_section}
It is often convenient to put an inner product on $\Fock$, where we declare $\{ |\lambda, h \rangle \}$ to be an orthonormal basis.

\section{Fock space as a representation} \label{F_rep}

\subsection{$\Fock$ as a representation of a Clifford algebra} \label{F_ferm}

The Clifford algebra is the associative algebra $\Cl$ generated by $\psi_m, \psi^*_m$ for $m \in \bz+\frac12$ subject to the relations
\begin{align}
& \psi_n\psi_m+\psi_m \psi_n=0 ,\\
& \psi^*_n\psi^*_m+\psi^*_m \psi_n^*=0, \\
& \label{Cliff3} \psi_n\psi^*_m+\psi^*_m \psi_n= \delta_{m,n}. 
\end{align}
One can check that $\Cl$ acts on Fock space as follows: Use the description of $\Fock$ in terms of semi-infinite wedges from \S\ref{wedge_section}. For $m \in \bz+\frac12$ and $v$ a regular semi-infinite wedge, define
\begin{align}
\psi_m \cdot v=  e_m \wedge v.
\end{align}
The generator $\psi_n^*$ acts on $\Fock$ as the adjoint of $\psi_n^*$ with respect to the inner product from \S\ref{form_section}. Explicitly, for $v$ a regular semi-infinite wedge, 
\begin{equation}
\psi_m^* \cdot v=\begin{cases}
0 \quad \text{ if } e_m \text{ does not appear as a factor in } v, \\
v' \quad \text{if } v \text{ can be expressed as } e_m \wedge v' \text{ for a wedge } v'.
\end{cases}
\end{equation}
One can obtain any regular semi-infinite wedge $v$ from any other by applying a finite number of operators $\psi_m$ and $\psi_m^*$. Also, given any linear combination of regular semi-infinite wedges, one can project onto a single term by applying enough $\psi_n \psi_n^*$ and $\psi^*_n \psi_n$. Together this implies that $\Fock$ is irreducible as a representation of $\Cl$. 

\begin{Definition}
$\widetilde{\Cl}$ is the completion of $\Cl$ in the topology generated by the open sets $X+ \Cl \psi_m^*$ and $X+\Cl \psi_{-m}$ for all $X \in \Cl$, $m>0$. Explicitly, an element of $\widetilde \Cl$ is an infinite sum
\begin{equation}
z+ \sum_{m<N} X_m \psi_m + \sum_{m>M} Y_m \psi^*_m, 
\end{equation}
where $z \in \bc$,  $N,M \in \bz$, and all $X_m$, $Y_m$ are elements of $\Cl$.
\end{Definition}

The completion $\widetilde{\Cl}$ acts in a well-defined way on $\Fock$, since, for each regular semi-infinite wedge $v$, all but finitely many of $\psi^*_m, \psi_{-m}$ for $m>0$ act by 0 on $v$. 

\begin{Comment}
When it seems prudent to distinguish $X \in \widetilde{\Cl}$ from its action of $\Fock$, we use the notation $\pi_\Fock(X)$ to denote the action.
\end{Comment}

\subsection{$\Bock$ as a representation of a Weyl algebra} \label{F_bos}

\begin{Definition} \label{def:H}
The infinite dimensional Heisenberg algebra $\Hi$ is generated by $\alpha_k$ for $k \in \bz_{\neq 0}$ and a central element $c$ subject to the relations:
\begin{align*}
\alpha_j \alpha_k -\alpha_k \alpha_j=  j \delta_{j,-k} c.
\end{align*}
\end{Definition}

\begin{Definition}
The Weyl algebra $\cW$ is
\begin{equation*}
\cW:= \Hi/(c-1).
\end{equation*}
\end{Definition}

\begin{Proposition} \label{W-acts} 
The Weyl algebra $\cW$ acts on $\Bock$ by the follows formulas. Each $B^{(h)}$ is preserved under this action and forms an irreducible representation for $\cW$.
\begin{align*}
&\alpha_k \rightarrow  \frac{\partial}{\partial x_{k}}  &&\text{ if } k > 0, \\
& \alpha_k \rightarrow \text{ multiplication by } -k x_{-k} && \text{ if } k <0.
\end{align*} 
\end{Proposition}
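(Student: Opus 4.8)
The plan is to verify that the proposed assignment respects the defining relations of $\cW$, check that each charge subspace is invariant, and then prove irreducibility by extracting a vacuum vector and generating from it.

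First I would confirm the relations. Writing $\rho(\alpha_k)$ for the proposed operator, it suffices to check $[\rho(\alpha_j),\rho(\alpha_k)]=j\,\delta_{j,-k}\,\id$ on $\Bock$, since $\cW=\Hi/(c-1)$ is presented by exactly these relations with $c$ acting as the identity. When $j,k$ have the same sign the two operators are either both differentiations or both multiplications, hence commute, and moreover $\delta_{j,-k}=0$; so both sides vanish. The only substantive case is opposite signs, say $j>0>k$, where the bracket reduces to $[\,\partial/\partial x_j,\ -k\,x_{-k}\,]$. By the product rule this equals $-k\,\delta_{j,-k}\,\id$, and since the Kronecker delta is supported on $j=-k$ this coincides with $j\,\delta_{j,-k}\,\id$, as required; the case $j<0<k$ follows by antisymmetry. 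This establishes that $\rho$ is a well-defined action of $\cW$ on $\Bock$.

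Next, invariance of $B^{(h)}=q^h\bc[x_1,x_2,\dots]$ is immediate: each generator acts by a partial derivative or by multiplication by a variable, both of which send $q^h\bc[x_1,x_2,\dots]$ into itself and leave the power of $q$ untouched. Thus $\Bock=\oplus_{h\in\bz}B^{(h)}$ is a decomposition into subrepresentations, and it remains to show each summand is irreducible. Since multiplication by $q^h$ intertwines the $\cW$-actions on $B^{(0)}$ and $B^{(h)}$, it is enough to treat $B^{(0)}=\bc[x_1,x_2,\dots]$.

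For irreducibility, let $M\subseteq B^{(0)}$ be a nonzero subrepresentation and pick $0\neq f\in M$. The key step is to extract the vacuum vector $1$ from $f$: among the monomials $x^a:=\prod_k x_k^{a_k}$ occurring in $f$ with nonzero coefficient, choose one of maximal total degree, and apply the operator $D:=\prod_k (\partial/\partial x_k)^{a_k}\in\cW$ (a finite product). For any monomial $x^b$ in $f$, $D(x^b)$ is nonzero only if $b_k\geq a_k$ for all $k$, which forces $\deg b\geq\deg a$; by maximality $\deg b=\deg a$, hence $b=a$. So $D(f)$ collapses to the single surviving term, a nonzero constant, giving $1\in M$ after rescaling. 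Finally, since $\alpha_{-k}$ acts as multiplication by $k\,x_k$ for $k>0$, repeatedly applying these raising operators to $1$ produces every monomial, so $M=B^{(0)}$. I expect the main point to get right is this degree-maximality choice, which is precisely what prevents cancellation and guarantees $D(f)$ is a nonzero scalar; everything else is routine bookkeeping.
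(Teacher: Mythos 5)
Your proposal is correct and follows essentially the same route as the paper: verify the Heisenberg relations directly with $c$ acting as $1$, note that the generators do not change the power of $q$, and prove irreducibility by differentiating any nonzero element down to the vacuum and then multiplying back up to reach every monomial. Your degree-maximality argument simply makes precise the paper's first bullet point (that partial derivatives bring any nonzero $p$ to a nonzero multiple of $q^h$), which the paper leaves as an exercise.
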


\begin{proof}
A direct calculation shows that, defining the actions of $\alpha_k$ and $\alpha_{-k}$ as above and allowing $c$ to act as multiplication by $1$, the defining relations in Definition \ref{def:H} hold. So this gives an action of $\cW$ on $\Bock$. Since none of these operators affect the power of $q$ in any monomial each $B^{(h)}$ is preserved. 
To see that each $B^{(h)}$ is irreducible as a $\cW$ module, notice that
\begin{itemize}
\item Any non-zero $p \in B^{(h)}$ can be brought to a non-zero multiple of $q^h$ by applying a sequence of partial derivatives, which are the actions of $\alpha_k$ for $k >0$.

 \item Any monomial in $B^{(h)}$ can be constructed from $q^h$ by applying a sequence off multiplications by the generators which, up to scalar, are the actions of $\alpha_k$ for $k <0$. 
\end{itemize}
\end{proof}

\begin{Comment}
When it seems prudent to distinguish $Y \in \cW$ from its action of $\Bock$ we use the notation $\pi_\Bock(Y)$ to denote the action. 
\end{Comment}

\subsection{The boson-fermion correspondence} \label{Boson-Fermion-correspondence}

Proposition \ref{Fock-sym} gives an isomorphism of vector spaces $\sigma: \Fock \rightarrow \Bock$. This isomorphism was chosen because it reveals an important relationship between the algebras $\Cl$ and $\cW$, which is known as the boson-fermion correspondence. We now explain that relationship, referring to \cite[Chapter 14]{Kac} for rigorous proofs.

\begin{Definition}
For $Y \in \cW$, let $\pi_\Fock(Y)= \sigma^{-1} \circ \pi_\Bock(Y) \circ \sigma$. That is, $\pi_\Fock(Y)$ is the operator on $\Fock$ induced from $\pi_\Bock(Y)$ by the vector space isomorphism $\sigma: \Fock \rightarrow \Bock$.
\end{Definition}

\begin{Proposition} \label{finding-Bosons} (see \cite[\S14.10]{Kac}) 
For all $k \in \bz_{\neq 0}$,
\begin{align*}
 \pi_\Fock(\alpha_k) = \pi_\Fock \sum_{m \in \bz+\frac12} \psi_{m} \psi^*_{m+k}.
\end{align*}
\end{Proposition}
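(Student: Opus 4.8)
The plan is to write $a_k := \sum_{m\in\bz+\frac12}\psi_m\psi^*_{m+k}$ and to prove the equality of operators $\pi_\Fock(a_k)=\pi_\Fock(\alpha_k)$ on $\Fock$. First I would record the elementary structural facts. For $k\neq 0$ the two factors $\psi_m$ and $\psi^*_{m+k}$ carry distinct indices, and on any regular semi-infinite wedge all but finitely many summands act by $0$ (a term survives only near the finitely many positions where occupied and empty beads interchange), so $a_k$ is a well-defined element of $\widetilde\Cl$ acting on $\Fock$. Since $\psi_m$ inserts and $\psi^*_{m+k}$ removes exactly one bead, $a_k$ preserves each $\Fock^{(h)}$.

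Next I would halve the work using adjoints. The inner product of \S\ref{form_section} makes $\psi^*_m$ the adjoint of $\psi_m$, so $a_k^\dagger=\sum_m\psi_{m+k}\psi^*_m=a_{-k}$. On the bosonic side, transporting that inner product along $\sigma$ makes $\{q^h\chi_\lambda\}$ orthonormal; under the identification $x_i\leftrightarrow\frac1i p_i$ this is the Hall inner product (for which the $s_\lambda$ are orthonormal), and multiplication by $p_k$ is adjoint to $k\,\partial/\partial p_k=\partial/\partial x_k$. Hence $\pi_\Fock(\alpha_k)^\dagger=\pi_\Fock(\alpha_{-k})$, matching $a_k^\dagger=a_{-k}$, so it suffices to prove $\pi_\Fock(a_{-k})=\pi_\Fock(\alpha_{-k})$ for the creation (multiplication) modes $k>0$; the annihilation modes then follow by taking adjoints.

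For the creation modes I would compute both sides explicitly on a basis vector $|\lambda,h\rangle$. On the bosonic side, $\pi_\Fock(\alpha_{-k})|\lambda,h\rangle=\sigma^{-1}\bigl(q^h\,(kx_k)\chi_\lambda\bigr)$, and since $kx_k$ corresponds to the power sum $p_k$ this is $\sigma^{-1}(q^h\,p_k s_\lambda)$; by the Murnaghan--Nakayama rule $p_k s_\lambda=\sum_\mu(-1)^{\mathrm{ht}(\mu/\lambda)}s_\mu$, summed over all $\mu$ obtained from $\lambda$ by adding a border strip of size $k$. On the fermionic side, $a_{-k}=\sum_m\psi_m\psi^*_{m-k}$ moves a single black bead from an occupied position $m-k$ to an empty position $m$ exactly $k$ steps away, and reordering the resulting wedge into normal form contributes the sign $(-1)^{\#\{\text{occupied positions strictly between } m-k \text{ and } m\}}$. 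The abacus dictionary of \S\ref{CP_section} identifies such length-$k$ bead moves with additions of size-$k$ border strips, and the number of occupied positions jumped equals $\mathrm{ht}(\mu/\lambda)$, so the two expansions agree term by term.

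The main obstacle is exactly this last sign bookkeeping: matching the fermionic reordering sign with the Murnaghan--Nakayama height under the right-to-left, decreasing-index conventions of \S\ref{wedge_section}, and, if one wants a self-contained argument, deriving the Murnaghan--Nakayama rule itself from the bead model rather than quoting it. As a consistency check I would also verify the Heisenberg relations $[a_j,a_k]=j\,\delta_{j,-k}\,\Id$ directly from the Clifford relations; the only delicate case is $j=-k$, where $[a_j,a_{-j}]=\sum_m(\psi_m\psi^*_m-\psi_{m+j}\psi^*_{m+j})$ is a difference of number operators that individually diverge but telescopes to the finite scalar $j$ on each charged partition, reproducing the central charge $c=1$.
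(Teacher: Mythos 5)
Your proposal is correct, but it takes a genuinely different route from the paper, which in fact offers no self-contained proof: Proposition \ref{finding-Bosons} is simply cited to \cite[\S14.10]{Kac}, and the strategy the paper sketches (in the discussion after Proposition \ref{finding-Fermions}) is Kac's --- first check that the operators $a_k=\sum_m\psi_m\psi^*_{m+k}$ satisfy the Weyl relations (this is your closing ``consistency check'', and also the content of Comment \ref{aaa}), then use irreducibility to deduce that \emph{some} vector space isomorphism intertwines the two actions, and only at the end identify that isomorphism with the explicit $\sigma$ of Proposition \ref{Fock-sym}, which the paper calls the hard final step. Your argument attacks that hard step head-on: after the legitimate adjoint reduction ($a_k^\dagger=a_{-k}$, and $\pi_\Fock(\alpha_k)^\dagger=\pi_\Fock(\alpha_{-k})$ because $\sigma$ carries the orthonormal basis $\{|\lambda,h\rangle\}$ to $\{q^h\chi_\lambda\}$, i.e.\ to Schur functions under $x_i\leftrightarrow p_i/i$, so the transported form is the Hall form, for which multiplication by $p_k$ is adjoint to $k\,\partial/\partial p_k=\partial/\partial x_k$), you verify the creation modes on each basis vector by matching the Murnaghan--Nakayama expansion of $p_k s_\lambda$ against signed bead moves. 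The sign bookkeeping you flag as the main obstacle does close: if $e_{m-k}$ is the $j$-th factor of the normally ordered wedge and exactly $i$ factors have index exceeding $m$, then extracting $e_{m-k}$ costs $(-1)^{j-1}$ and normally ordering $e_m$ costs $(-1)^{i}$, and since position $m$ must be vacant, $(j-1)-i$ is exactly the number of occupied positions strictly between $m-k$ and $m$; that this count equals $\mathrm{ht}(\mu/\lambda)$ is the standard abacus--ribbon dictionary, and the resulting sign rule is precisely the one stated in Example \ref{bosons-acting}. The trade-off: your route is elementary and proves exactly this proposition, at the price of quoting the Murnaghan--Nakayama rule (e.g.\ \cite[\S 7.17]{Sta}) and the abacus--ribbon correspondence; Kac's route is heavier but yields the vertex operator machinery needed for Proposition \ref{finding-Fermions}, which your computation does not produce.
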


\begin{Comment} \label{com:a0}
In \cite[\S14.10]{Kac} Kac also defines $\alpha_0$. Proposition \ref{finding-Bosons} can be extended to include $\alpha_0$, but one must be careful, since $\displaystyle \sum_{m \in \bz+\frac12} \psi_{m} \psi^*_{m}$ would acts on any $v\in \Fock$ as multiplication by $\infty$. Instead, $ \alpha_0$ corresponds to $\displaystyle \sum_{m >0 } \psi_{m} \psi^*_{m} - \sum_{m <0} \psi^*_{m} \psi_{m}$, which acts on $F^{(h)}$ simply as multiplication by $h$. 
\end{Comment}

\begin{Example} \label{bosons-acting} The action of $\alpha_k$ on the standard basis of $\Fock$ has a nice description using Maya diagrams. If $v$ is a Maya diagram, then
$\alpha_k v$ is the sum of $(-1)^{j(v,v')} v'$ over all $v'$ obtained from $v$ by moving a single black bead to the right exactly $k$ places, where $j(v,v')$ is the number of black beads that are ``jumped" by the bead that moves. If $k$ is negative, this means the beads move to the left instead. For example,
\begin{equation}
\begin{aligned}
\alpha_{2} ( & \hspace{-1cm}
 \setlength{\unitlength}{0.4cm} \begin{picture}(30,0.3)

\put(2.5,0){\ldots}
\put(26,0){\ldots}

\put(4.5,0){\circle{0.5}}
\put(5.5,0){\circle{0.5}}
\put(6.5,0){\circle{0.5}}
\put(7.5,0){\circle{0.5}}
\put(8.5,0){\circle{0.5}}
\put(9.5,0){\circle{0.5}}
\put(10.5,0){\circle{0.5}}
\put(11.5,0){\circle{0.5}}
\put(12.5,0){\circle*{0.5}}
\put(13.5,0){\circle{0.5}}
\put(14.5,0){\circle*{0.5}}

\put(15,-0.5){\line(0,1){1}}

\put(15.5,0){\circle*{0.5}}
\put(16.5,0){\circle{0.5}}
\put(17.5,0){\circle{0.5}}
\put(18.5,0){\circle*{0.5}}
\put(19.5,0){\circle*{0.5}}
\put(20.5,0){\circle{0.5}}
\put(21.5,0){\circle*{0.5}}
\put(22.5,0){\circle*{0.5}}
\put(23.5,0){\circle*{0.5}}
\put(24.5,0){\circle*{0.5}}
\put(25.5,0){\circle*{0.5}}

\end{picture} \hspace{-1cm}  ) = \\ \\
 - &  \hspace{-1cm} \setlength{\unitlength}{0.4cm}  \begin{picture}(30,0.3)

\put(2.5,0){\ldots}
\put(26,0){\ldots}

\put(4.5,0){\circle{0.5}}
\put(5.5,0){\circle{0.5}}
\put(6.5,0){\circle{0.5}}
\put(7.5,0){\circle{0.5}}
\put(8.5,0){\circle{0.5}}
\put(9.5,0){\circle{0.5}}
\put(10.5,0){\circle{0.5}}
\put(11.5,0){\circle{0.5}}
\put(12.5,0){\circle*{0.5}}
\put(13.5,0){\circle{0.5}}
\put(14.5,0){\circle{0.5}}

\put(15,-0.5){\line(0,1){1}}

\put(15.5,0){\circle*{0.5}}
\put(16.5,0){\circle*{0.5}}
\put(17.5,0){\circle{0.5}}
\put(18.5,0){\circle*{0.5}}
\put(19.5,0){\circle*{0.5}}
\put(20.5,0){\circle{0.5}}
\put(21.5,0){\circle*{0.5}}
\put(22.5,0){\circle*{0.5}}
\put(23.5,0){\circle*{0.5}}
\put(24.5,0){\circle*{0.5}}
\put(25.5,0){\circle*{0.5}}

\end{picture} \\
+ &  \hspace{-1cm} \setlength{\unitlength}{0.4cm} \begin{picture}(30,0.3)

\put(2.5,0){\ldots}
\put(26,0){\ldots}

\put(4.5,0){\circle{0.5}}
\put(5.5,0){\circle{0.5}}
\put(6.5,0){\circle{0.5}}
\put(7.5,0){\circle{0.5}}
\put(8.5,0){\circle{0.5}}
\put(9.5,0){\circle{0.5}}
\put(10.5,0){\circle{0.5}}
\put(11.5,0){\circle{0.5}}
\put(12.5,0){\circle*{0.5}}
\put(13.5,0){\circle{0.5}}
\put(14.5,0){\circle*{0.5}}

\put(15,-0.5){\line(0,1){1}}

\put(15.5,0){\circle{0.5}}
\put(16.5,0){\circle{0.5}}
\put(17.5,0){\circle*{0.5}}
\put(18.5,0){\circle*{0.5}}
\put(19.5,0){\circle*{0.5}}
\put(20.5,0){\circle{0.5}}
\put(21.5,0){\circle*{0.5}}
\put(22.5,0){\circle*{0.5}}
\put(23.5,0){\circle*{0.5}}
\put(24.5,0){\circle*{0.5}}
\put(25.5,0){\circle*{0.5}}

\end{picture} \\
- &  \hspace{-1cm} \setlength{\unitlength}{0.4cm} \begin{picture}(30,0.3)

\put(2.5,0){\ldots}
\put(26,0){\ldots}

\put(4.5,0){\circle{0.5}}
\put(5.5,0){\circle{0.5}}
\put(6.5,0){\circle{0.5}}
\put(7.5,0){\circle{0.5}}
\put(8.5,0){\circle{0.5}}
\put(9.5,0){\circle{0.5}}
\put(10.5,0){\circle{0.5}}
\put(11.5,0){\circle{0.5}}
\put(12.5,0){\circle*{0.5}}
\put(13.5,0){\circle{0.5}}
\put(14.5,0){\circle*{0.5}}

\put(15,-0.5){\line(0,1){1}}

\put(15.5,0){\circle*{0.5}}
\put(16.5,0){\circle{0.5}}
\put(17.5,0){\circle{0.5}}
\put(18.5,0){\circle{0.5}}
\put(19.5,0){\circle*{0.5}}
\put(20.5,0){\circle*{0.5}}
\put(21.5,0){\circle*{0.5}}
\put(22.5,0){\circle*{0.5}}
\put(23.5,0){\circle*{0.5}}
\put(24.5,0){\circle*{0.5}}
\put(25.5,0){\circle*{0.5}}

\end{picture} 
\end{aligned}
\end{equation}
and
\begin{equation}
\begin{aligned}
\alpha_{-4} ( & \hspace{-1cm}
 \setlength{\unitlength}{0.4cm} \begin{picture}(30,0.3)

\put(2.5,0){\ldots}
\put(26,0){\ldots}

\put(4.5,0){\circle{0.5}}
\put(5.5,0){\circle{0.5}}
\put(6.5,0){\circle{0.5}}
\put(7.5,0){\circle{0.5}}
\put(8.5,0){\circle{0.5}}
\put(9.5,0){\circle{0.5}}
\put(10.5,0){\circle{0.5}}
\put(11.5,0){\circle{0.5}}
\put(12.5,0){\circle*{0.5}}
\put(13.5,0){\circle{0.5}}
\put(14.5,0){\circle*{0.5}}

\put(15,-0.5){\line(0,1){1}}

\put(15.5,0){\circle*{0.5}}
\put(16.5,0){\circle{0.5}}
\put(17.5,0){\circle{0.5}}
\put(18.5,0){\circle*{0.5}}
\put(19.5,0){\circle*{0.5}}
\put(20.5,0){\circle{0.5}}
\put(21.5,0){\circle*{0.5}}
\put(22.5,0){\circle*{0.5}}
\put(23.5,0){\circle*{0.5}}
\put(24.5,0){\circle*{0.5}}
\put(25.5,0){\circle*{0.5}}

\end{picture} \hspace{-1cm}  ) = \\ \\
 &  \hspace{-1cm} \setlength{\unitlength}{0.4cm}  \begin{picture}(30,0.3)

\put(2.5,0){\ldots}
\put(26,0){\ldots}

\put(4.5,0){\circle{0.5}}
\put(5.5,0){\circle{0.5}}
\put(6.5,0){\circle{0.5}}
\put(7.5,0){\circle{0.5}}
\put(8.5,0){\circle*{0.5}}
\put(9.5,0){\circle{0.5}}
\put(10.5,0){\circle{0.5}}
\put(11.5,0){\circle{0.5}}
\put(12.5,0){\circle{0.5}}
\put(13.5,0){\circle{0.5}}
\put(14.5,0){\circle*{0.5}}

\put(15,-0.5){\line(0,1){1}}

\put(15.5,0){\circle*{0.5}}
\put(16.5,0){\circle{0.5}}
\put(17.5,0){\circle{0.5}}
\put(18.5,0){\circle*{0.5}}
\put(19.5,0){\circle*{0.5}}
\put(20.5,0){\circle{0.5}}
\put(21.5,0){\circle*{0.5}}
\put(22.5,0){\circle*{0.5}}
\put(23.5,0){\circle*{0.5}}
\put(24.5,0){\circle*{0.5}}
\put(25.5,0){\circle*{0.5}}

\end{picture} \\
- &  \hspace{-1cm} \setlength{\unitlength}{0.4cm} \begin{picture}(30,0.3)

\put(2.5,0){\ldots}
\put(26,0){\ldots}

\put(4.5,0){\circle{0.5}}
\put(5.5,0){\circle{0.5}}
\put(6.5,0){\circle{0.5}}
\put(7.5,0){\circle{0.5}}
\put(8.5,0){\circle{0.5}}
\put(9.5,0){\circle{0.5}}
\put(10.5,0){\circle*{0.5}}
\put(11.5,0){\circle{0.5}}
\put(12.5,0){\circle*{0.5}}
\put(13.5,0){\circle{0.5}}
\put(14.5,0){\circle{0.5}}

\put(15,-0.5){\line(0,1){1}}

\put(15.5,0){\circle*{0.5}}
\put(16.5,0){\circle{0.5}}
\put(17.5,0){\circle{0.5}}
\put(18.5,0){\circle*{0.5}}
\put(19.5,0){\circle*{0.5}}
\put(20.5,0){\circle{0.5}}
\put(21.5,0){\circle*{0.5}}
\put(22.5,0){\circle*{0.5}}
\put(23.5,0){\circle*{0.5}}
\put(24.5,0){\circle*{0.5}}
\put(25.5,0){\circle*{0.5}}

\end{picture} \\
+ &  \hspace{-1cm} \setlength{\unitlength}{0.4cm} \begin{picture}(30,0.3)

\put(2.5,0){\ldots}
\put(26,0){\ldots}

\put(4.5,0){\circle{0.5}}
\put(5.5,0){\circle{0.5}}
\put(6.5,0){\circle{0.5}}
\put(7.5,0){\circle{0.5}}
\put(8.5,0){\circle{0.5}}
\put(9.5,0){\circle{0.5}}
\put(10.5,0){\circle{0.5}}
\put(11.5,0){\circle*{0.5}}
\put(12.5,0){\circle*{0.5}}
\put(13.5,0){\circle{0.5}}
\put(14.5,0){\circle*{0.5}}

\put(15,-0.5){\line(0,1){1}}

\put(15.5,0){\circle{0.5}}
\put(16.5,0){\circle{0.5}}
\put(17.5,0){\circle{0.5}}
\put(18.5,0){\circle*{0.5}}
\put(19.5,0){\circle*{0.5}}
\put(20.5,0){\circle{0.5}}
\put(21.5,0){\circle*{0.5}}
\put(22.5,0){\circle*{0.5}}
\put(23.5,0){\circle*{0.5}}
\put(24.5,0){\circle*{0.5}}
\put(25.5,0){\circle*{0.5}}

\end{picture} \\
+ &  \hspace{-1cm} \setlength{\unitlength}{0.4cm} \begin{picture}(30,0.3)

\put(2.5,0){\ldots}
\put(26,0){\ldots}

\put(4.5,0){\circle{0.5}}
\put(5.5,0){\circle{0.5}}
\put(6.5,0){\circle{0.5}}
\put(7.5,0){\circle{0.5}}
\put(8.5,0){\circle{0.5}}
\put(9.5,0){\circle{0.5}}
\put(10.5,0){\circle{0.5}}
\put(11.5,0){\circle{0.5}}
\put(12.5,0){\circle*{0.5}}
\put(13.5,0){\circle{0.5}}
\put(14.5,0){\circle*{0.5}}

\put(15,-0.5){\line(0,1){1}}

\put(15.5,0){\circle*{0.5}}
\put(16.5,0){\circle{0.5}}
\put(17.5,0){\circle*{0.5}}
\put(18.5,0){\circle*{0.5}}
\put(19.5,0){\circle*{0.5}}
\put(20.5,0){\circle{0.5}}
\put(21.5,0){\circle{0.5}}
\put(22.5,0){\circle*{0.5}}
\put(23.5,0){\circle*{0.5}}
\put(24.5,0){\circle*{0.5}}
\put(25.5,0){\circle*{0.5}}

\end{picture} \\
- &  \hspace{-1cm} \setlength{\unitlength}{0.4cm} \begin{picture}(30,0.3)

\put(2.5,0){\ldots}
\put(26,0){\ldots}

\put(4.5,0){\circle{0.5}}
\put(5.5,0){\circle{0.5}}
\put(6.5,0){\circle{0.5}}
\put(7.5,0){\circle{0.5}}
\put(8.5,0){\circle{0.5}}
\put(9.5,0){\circle{0.5}}
\put(10.5,0){\circle{0.5}}
\put(11.5,0){\circle{0.5}}
\put(12.5,0){\circle*{0.5}}
\put(13.5,0){\circle{0.5}}
\put(14.5,0){\circle*{0.5}}

\put(15,-0.5){\line(0,1){1}}

\put(15.5,0){\circle*{0.5}}
\put(16.5,0){\circle{0.5}}
\put(17.5,0){\circle{0.5}}
\put(18.5,0){\circle*{0.5}}
\put(19.5,0){\circle*{0.5}}
\put(20.5,0){\circle*{0.5}}
\put(21.5,0){\circle*{0.5}}
\put(22.5,0){\circle*{0.5}}
\put(23.5,0){\circle*{0.5}}
\put(24.5,0){\circle{0.5}}
\put(25.5,0){\circle*{0.5}}
\end{picture}
\end{aligned}
\end{equation} 
\end{Example}

\begin{Comment} For $k >0$, the basis vectors which have non-zero coefficient in $\alpha_k |\lambda,h \rangle$ are exactly those $|\mu,h \rangle$ where $\mu$ is obtained from $\lambda$ by removing a ``ribbon" or ``rim-hook" of length $k$. For $k<0$, they are those $|\mu,h \rangle$ where $\mu$ is obtained from $\lambda$ by adding a ribbon of length $k$. This is part of the reason ribbons appear in the study of Fock space (see e.g. \cite{LLT}).
\end{Comment}

For $k \neq 0$ we have $\psi_m\psi_{m+k}^*=- \psi_{m+k}^* \psi_m$, so $ \pi_\Fock(\alpha_k)  \in \widetilde \Cl$.
Thus Proposition \ref{finding-Bosons} gives an imbedding of $\cW$ into $\widetilde \Cl$. It is natural to ask if one can go the other way and express $\Cl$ in terms of $\cW$. All elements of $\cW$ preserve the subspaces $\Fock^{(h)}$ of $\Fock$, and the generators of $\Cl$ do not preserve these subspaces. So we will need to enlarge $\cW$. It turns out that it suffices to introduce one more operator. 

\begin{Definition}
The shift operator $s: \Fock \rightarrow \Fock$ is defined by $s |\lambda, h \rangle=|\lambda, h+1\rangle$. Note that $s$ corresponds to multiplication by $q$ under the vector space isomorphism $\sigma:\Fock \rightarrow \Bock$. 
\end{Definition}

Introduce the following power series.
In each case all but finitely many terms with negative powers of $z$ act as zero on any fixed $|\lambda, h \rangle,$ so acting on any fixed element of $\Fock$ gives a power series with coefficients in $\Fock$ and only finitely many negative powers of $z$.

\begin{align}
\label{field1} \psi(z) & = \sum_{m \in \bz+\frac12} z^m \pi_F(\psi_m) \\
\psi^*(z)&  = \sum_{m \in \bz+\frac12} z^{-m} \pi_F(\psi_m^*) \\
\Gamma_+(z) & = \exp \sum_{k \in \bz_{>0}} \frac{z^{-k}}{k} \pi_F(\alpha_k) \\
\label{field4} \Gamma_-(z) &= \exp \sum_{k \in \bz_{>0}} \frac{z^k}{k} \pi_F(\alpha_{-k}).
\end{align}

The following is \cite[Theorem 14.10]{Kac}, adjusted slightly to match our conventions. 

\begin{Proposition} \label{finding-Fermions}
As operators on $\Fock$, 
\begin{align*}
 \psi(z) & =  s z^{\ch +\frac12}  \Gamma_-(z) \Gamma_+(z)^{-1},  \\
 \psi^*(z) &=   s^{-1} z^{-\ch+\frac12}  \Gamma_-(z)^{-1} \Gamma_+(z). 
\end{align*}
Here $\ch$ sends a charged partition to its charge, so $z^{\ch}$ acts on $|\lambda,h \rangle$ as multiplication by $z^h$.
\end{Proposition}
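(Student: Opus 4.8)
The plan is to prove each identity by comparing both sides as operator-valued formal series in $z$, exploiting the fact that $\Fock$ is generated from the charge-$h$ vacua $|\emptyset,h\rangle$ by the bosonic creation operators. Write $R(z):=s\,z^{\ch+\frac12}\Gamma_-(z)\Gamma_+(z)^{-1}$ for the proposed right-hand side of the first identity; the goal is to show $R(z)=\psi(z)$. Since each $\Fock^{(h)}$ is an irreducible $\cW$-module (Proposition \ref{W-acts}), every basis vector can be written as $|\lambda,h\rangle=P(\alpha_{-1},\alpha_{-2},\dots)\,|\emptyset,h\rangle$ for a polynomial $P$ in the creation operators $\alpha_{-k}$, $k>0$. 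It therefore suffices to check that $D(z):=\psi(z)-R(z)$ (i) has a controlled commutator with each $\alpha_{-k}$, and (ii) annihilates every vacuum $|\emptyset,h\rangle$.

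For (i), I would first extract the basic bracket from Proposition \ref{finding-Bosons}: using $\alpha_k=\sum_m\psi_m\psi^*_{m+k}$ together with the Clifford relations, a short calculation gives $[\alpha_k,\psi_m]=\psi_{m-k}$ and $[\alpha_k,\psi^*_m]=-\psi^*_{m+k}$ for $k\neq0$, whence $[\alpha_k,\psi(z)]=z^k\psi(z)$. On the other side, $s$, $z^{\ch+\frac12}$, and $\Gamma_-(z)$ all commute with $\alpha_{-k}$, while the Heisenberg relation $[\alpha_{-k},\alpha_k]=-k$ yields $[\alpha_{-k},\Gamma_+(z)^{-1}]=z^{-k}\Gamma_+(z)^{-1}$; combining these gives $[\alpha_{-k},R(z)]=z^{-k}R(z)$, the same relation $\psi(z)$ satisfies. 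Hence $[\alpha_{-k},D(z)]=z^{-k}D(z)$, so $D(z)\,\alpha_{-k}=(\alpha_{-k}-z^{-k})\,D(z)$. Moving $D(z)$ rightward through $P$ thus replaces each $\alpha_{-k}$ by $\alpha_{-k}-z^{-k}$ and leaves $D(z)$ acting on the vacuum; once (ii) is established this forces $D(z)\,|\lambda,h\rangle=0$ for every basis vector, i.e. $D(z)=0$.

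For (ii), I would evaluate both sides on $|\emptyset,h\rangle$. Since $\alpha_k|\emptyset,h\rangle=0$ for $k>0$ we get $\Gamma_+(z)^{-1}|\emptyset,h\rangle=|\emptyset,h\rangle$, and under $\sigma$ the operator $\sum_{k>0}\frac{z^k}{k}\alpha_{-k}$ becomes multiplication by $\sum_{k>0}\frac{z^k}{k}p_k$, so $\Gamma_-(z)|\emptyset,h\rangle$ corresponds to $q^h\exp\!\bigl(\sum_{k>0}\frac{z^k}{k}p_k\bigr)=q^h\sum_{n\ge0}s_{(n)}z^n$ by the generating-function identity for complete homogeneous symmetric functions. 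Applying $z^{\ch+\frac12}$ and then $s$ gives
\[
R(z)\,|\emptyset,h\rangle=z^{h+\frac12}\sum_{n\ge0}z^{n}\,|(n),h+1\rangle.
\]
On the fermionic side, in the Maya-diagram picture $\psi_m|\emptyset,h\rangle$ vanishes unless $m=h+\tfrac12+n$ with $n\ge0$, in which case it equals $|(n),h+1\rangle$ with no reordering sign, so $\psi(z)|\emptyset,h\rangle=\sum_{n\ge0}z^{h+\frac12+n}|(n),h+1\rangle$, matching $R(z)|\emptyset,h\rangle$.

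The identity for $\psi^*(z)$ is proved by the same scheme: the analogous brackets give $[\alpha_{-k},\psi^*(z)]=-z^{-k}\psi^*(z)$ and the same relation for $s^{-1}z^{-\ch+\frac12}\Gamma_-(z)^{-1}\Gamma_+(z)$, while on the vacuum $\Gamma_-(z)^{-1}|\emptyset,h\rangle$ corresponds to $q^h\sum_{n\ge0}(-1)^n s_{(1^n)}z^n$ (elementary symmetric functions) and $\psi^*_m|\emptyset,h\rangle=(-1)^n|(1^n),h-1\rangle$ for $m=h-\tfrac12-n$, so both sides again agree. I expect the main difficulty to be bookkeeping rather than conceptual: getting every sign correct in the fermionic reorderings and in the commutator computation, and matching the two vacuum expansions through the generating identities $\exp(\sum_k\frac{p_k}{k}z^k)=\sum_n s_{(n)}z^n$ and $\exp(-\sum_k\frac{p_k}{k}z^k)=\sum_n(-1)^n s_{(1^n)}z^n$.
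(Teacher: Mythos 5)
Your proposal is correct, but it is genuinely different from what the paper does: the paper gives no proof at all of Proposition \ref{finding-Fermions}. It cites \cite[Theorem 14.10]{Kac} and sketches Kac's strategy, which runs in the opposite direction --- first show that the operators $\sum_m \psi_m\psi^*_{m+k}$ generate a copy of $\cW$ on $\Fock$, deduce the existence of \emph{some} intertwining isomorphism $\Bock \to \Fock$, then use commutation relations with $\psi(z),\psi^*(z)$ to get the vertex-operator formulas, and only at the end identify that isomorphism with the explicit $\sigma$ of Proposition \ref{Fock-sym}. Your argument instead takes Propositions \ref{Fock-sym}, \ref{W-acts} and \ref{finding-Bosons} as given (legitimate, since the paper states them before Proposition \ref{finding-Fermions}) and runs the standard ``uniqueness of intertwiner'' argument: both sides satisfy $[\alpha_{-k},\,\cdot\,]=z^{-k}(\,\cdot\,)$ (resp.\ $-z^{-k}(\,\cdot\,)$ for the $\psi^*$ identity), both sides agree on every vacuum $|\emptyset,h\rangle$, and the vectors $P(\alpha_{-1},\alpha_{-2},\dots)|\emptyset,h\rangle$ span $\Fock$, so the difference $D(z)$ is forced to vanish. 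I checked the key computations and they are right: $[\alpha_k,\psi_m]=\psi_{m-k}$ and $[\alpha_k,\psi^*_m]=-\psi^*_{m+k}$ from the Clifford relations; $[\alpha_{-k},\Gamma_+(z)^{\mp 1}]=\pm z^{-k}\Gamma_+(z)^{\mp 1}$ since the relevant bracket is central; the vacuum expansions $\Gamma_-(z)|\emptyset,h\rangle=\sum_n z^n|(n),h\rangle$ and $\Gamma_-(z)^{-1}|\emptyset,h\rangle=\sum_n(-1)^nz^n|(1^n),h\rangle$ via the $h_n$ and $e_n$ generating functions; and the fermionic side $\psi_{h+\frac12+n}|\emptyset,h\rangle=|(n),h+1\rangle$ (no sign) and $\psi^*_{h-\frac12-n}|\emptyset,h\rangle=(-1)^n|(1^n),h-1\rangle$ (sign from moving $e_m$ past $n$ factors), all consistent with the paper's conventions. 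What each approach buys: yours is a short, self-contained derivation showing that Proposition \ref{finding-Fermions} is a relatively easy consequence of Proposition \ref{finding-Bosons} \emph{for the explicit map} $\sigma$; the cost is that all the real difficulty is hidden in that hypothesis --- proving finding-Bosons with $\sigma$ given by character polynomials is exactly the hard part (steps (1), (2), (4) of the paper's sketch of Kac), whereas Kac's route proves both statements together from scratch. Be aware of that dependency, but as a proof of the stated proposition within the logical structure of these notes, your argument is complete and sound.
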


Proposition \ref{finding-Fermions}, in principle, expresses the fermionic operators $\psi_m$ and $\psi_m^*$ in terms of the $\alpha_k$. 
Its proof is pretty involved. It proceeds roughly as follows. Kac first shows that the right sides of the equations in Proposition \ref{finding-Bosons} generate an algebra of operators on $\Fock$ which is isomorphic to $\cW$. This then implies the existence of some vector space isomorphism from $\Bock$ to $\Fock$ which satisfies Proposition \ref{finding-Bosons}. One then studies commutation relations between the $\alpha_k$ and the generating functions $\psi(z)$ and $\psi^*(z)$ to prove that this isomorphism also satisfies Proposition \ref{finding-Fermions}. The final step is to show that the isomorphism is given by Proposition \ref{Fock-sym}. So really these notes are basically backwards!

\begin{Example}
Proposition \ref{finding-Fermions} predicts, for instance,
\begin{equation} \label{eq:exBC1}
\begin{aligned}
&[z^{3.5}] s z^{\ch +\frac12}  \Gamma_-(z) \Gamma_+(z)^{-1}  \left(
 \setlength{\unitlength}{0.4cm} \hspace{2.6cm} 
 \begin{picture}(4.5,0.3)

\put(-6,0){$\cdots$}
\put(3,0){$\cdots$}

\put(-4.5,0.3){\circle{0.5}}
\put(-3.5,0.3){\circle{0.5}}
\put(-2.5,0.3){\circle*{0.5}}
\put(-1.5,0.3){\circle{0.5}}
\put(-0.5,0.3){\circle*{0.5}}

\put(0,-0.5){\line(0,1){1.3}}

\put(0.5,0.3){\circle*{0.5}}
\put(1.5,0.3){\circle*{0.5}}
\put(2.5,0.3){\circle*{0.5}}

\end{picture}
\right) =
 \setlength{\unitlength}{0.4cm} \hspace{2.4cm} 
 \begin{picture}(4.5,0.3)

\put(-6,0){$\cdots$}
\put(3,0){$\cdots$}

\put(-4.5,0.3){\circle{0.5}}
\put(-3.5,0.3){\circle*{0.5}}
\put(-2.5,0.3){\circle*{0.5}}
\put(-1.5,0.3){\circle{0.5}}
\put(-0.5,0.3){\circle*{0.5}}

\put(0,-0.5){\line(0,1){1.3}}

\put(0.5,0.3){\circle*{0.5}}
\put(1.5,0.3){\circle*{0.5}}
\put(2.5,0.3){\circle*{0.5}}

\end{picture}.
\end{aligned}
\end{equation}
We will check this by explicitly calculating the left side. 
The only terms in $\Gamma_+(z)^{-1}$ that act without killing this vector are the identity and $-z^{-1} \alpha_1$. So the left side is equal to
$$
\begin{aligned}
&[z^{3.5}]  s z^{\ch +\frac12}  \Gamma_-(z)
(
 \setlength{\unitlength}{0.4cm}
 \hspace{2cm}
 \begin{picture}(4.5,0.3)

\put(-5,0){$\cdots$}
\put(3,0){$\cdots$}

\put(-3.5,0.3){\circle{0.5}}
\put(-2.5,0.3){\circle*{0.5}}
\put(-1.5,0.3){\circle{0.5}}
\put(-0.5,0.3){\circle*{0.5}}

\put(0,-0.5){\line(0,1){1.3}}

\put(0.5,0.3){\circle*{0.5}}
\put(1.5,0.3){\circle*{0.5}}
\put(2.5,0.3){\circle*{0.5}}

\end{picture} 
- z^{-1} 
 \hspace{2cm}
 \begin{picture}(4.5,0.3)

\put(-5,0){$\cdots$}
\put(3,0){$\cdots$}

\put(-3.5,0.3){\circle{0.5}}
\put(-2.5,0.3){\circle{0.5}}
\put(-1.5,0.3){\circle*{0.5}}
\put(-0.5,0.3){\circle*{0.5}}

\put(0,-0.5){\line(0,1){1}}

\put(0.5,0.3){\circle*{0.5}}
\put(1.5,0.3){\circle*{0.5}}
\put(2.5,0.3){\circle*{0.5}}

\end{picture}
).
\end{aligned}
$$
$\Gamma_-(z)$ commutes with $sz^{\text{ch}+\frac12}$, since each $\alpha_k$ does, and $sz^{\text{ch}+\frac12}$ multiplies these elements by $z^{2.5}$ and then shifts all beads one step to the left. This leaves
$$
[z^{3.5}] \left(
 z^{2.5}
 \Gamma_-(z)
(
 \setlength{\unitlength}{0.4cm}
 \hspace{2.4cm}
 \begin{picture}(4.5,0.3)

\put(-6,0){$\cdots$}
\put(3,0){$\cdots$}

\put(-4.5,0.3){\circle{0.5}}
\put(-3.5,0.3){\circle*{0.5}}
\put(-2.5,0.3){\circle{0.5}}
\put(-1.5,0.3){\circle*{0.5}}
\put(-0.5,0.3){\circle*{0.5}}

\put(0,-0.5){\line(0,1){1.3}}

\put(0.5,0.3){\circle*{0.5}}
\put(1.5,0.3){\circle*{0.5}}
\put(2.5,0.3){\circle*{0.5}}

\end{picture} 
)
- z^{1.5}   \Gamma_-(z)(
 \hspace{2cm}
 \begin{picture}(4.5,0.3)

\put(-5,0){$\cdots$}
\put(3,0){$\cdots$}

\put(-3.5,0.3){\circle{0.5}}
\put(-2.5,0.3){\circle*{0.5}}
\put(-1.5,0.3){\circle*{0.5}}
\put(-0.5,0.3){\circle*{0.5}}

\put(0,-0.5){\line(0,1){1.3}}

\put(0.5,0.3){\circle*{0.5}}
\put(1.5,0.3){\circle*{0.5}}
\put(2.5,0.3){\circle*{0.5}}

\end{picture}
)\right).
$$
The degree 1 part of $\Gamma_-(z)$ is $\alpha_{-1}$ and the degree $2$ parts is $\frac{\alpha_{-1}^2}{2}+\frac{\alpha_{-2}}{2}$, so this is 
$$
\alpha_{-1} (
 \setlength{\unitlength}{0.4cm}
 \hspace{2.4cm}
 \begin{picture}(4.5,0.3)

\put(-6,0){$\cdots$}
\put(3,0){$\cdots$}

\put(-4.5,0.3){\circle{0.5}}
\put(-3.5,0.3){\circle*{0.5}}
\put(-2.5,0.3){\circle{0.5}}
\put(-1.5,0.3){\circle*{0.5}}
\put(-0.5,0.3){\circle*{0.5}}

\put(0,-0.5){\line(0,1){1.3}}

\put(0.5,0.3){\circle*{0.5}}
\put(1.5,0.3){\circle*{0.5}}
\put(2.5,0.3){\circle*{0.5}}

\end{picture} 
)
-
(\frac{\alpha_{-1}^2}{2}+\frac{\alpha_{-2}}{2})(
 \hspace{2cm}
 \begin{picture}(4.5,0.3)

\put(-5,0){$\cdots$}
\put(3,0){$\cdots$}

\put(-3.5,0.3){\circle{0.5}}
\put(-2.5,0.3){\circle*{0.5}}
\put(-1.5,0.3){\circle*{0.5}}
\put(-0.5,0.3){\circle*{0.5}}

\put(0,-0.5){\line(0,1){1.3}}

\put(0.5,0.3){\circle*{0.5}}
\put(1.5,0.3){\circle*{0.5}}
\put(2.5,0.3){\circle*{0.5}}

\end{picture}
).
$$
Calculating, 
$$
\begin{aligned}
\alpha_{-1} (
 \setlength{\unitlength}{0.4cm}
 \hspace{2.4cm}
 \begin{picture}(4.5,0.3)

\put(-6,0){$\cdots$}
\put(3,0){$\cdots$}

\put(-4.5,0.3){\circle{0.5}}
\put(-3.5,0.3){\circle*{0.5}}
\put(-2.5,0.3){\circle{0.5}}
\put(-1.5,0.3){\circle*{0.5}}
\put(-0.5,0.3){\circle*{0.5}}

\put(0,-0.5){\line(0,1){1.3}}

\put(0.5,0.3){\circle*{0.5}}
\put(1.5,0.3){\circle*{0.5}}
\put(2.5,0.3){\circle*{0.5}}

\end{picture} 
)= & \hspace{0.47cm}
\setlength{\unitlength}{0.4cm}
 \hspace{2.4cm}
 \begin{picture}(4.5,0.3)

\put(-6,0){$\cdots$}
\put(3,0){$\cdots$}

\put(-4.5,0.3){\circle*{0.5}}
\put(-3.5,0.3){\circle{0.5}}
\put(-2.5,0.3){\circle{0.5}}
\put(-1.5,0.3){\circle*{0.5}}
\put(-0.5,0.3){\circle*{0.5}}

\put(0,-0.5){\line(0,1){1.3}}

\put(0.5,0.3){\circle*{0.5}}
\put(1.5,0.3){\circle*{0.5}}
\put(2.5,0.3){\circle*{0.5}}

\end{picture} \\
&+  \setlength{\unitlength}{0.4cm}
 \hspace{2.4cm}
 \begin{picture}(4.5,0.3)

\put(-6,0){$\cdots$}
\put(3,0){$\cdots$}

\put(-4.5,0.3){\circle{0.5}}
\put(-3.5,0.3){\circle*{0.5}}
\put(-2.5,0.3){\circle*{0.5}}
\put(-1.5,0.3){\circle{0.5}}
\put(-0.5,0.3){\circle*{0.5}}

\put(0,-0.5){\line(0,1){1.3}}

\put(0.5,0.3){\circle*{0.5}}
\put(1.5,0.3){\circle*{0.5}}
\put(2.5,0.3){\circle*{0.5}}

\end{picture} 
\end{aligned}
$$
$$
\begin{aligned}
(\frac{\alpha_{-1}^2}{2}+\frac{\alpha_{-2}}{2})(
 \setlength{\unitlength}{0.4cm}
 \hspace{2.4cm}
 \begin{picture}(4.5,0.3)

\put(-6,0){$\cdots$}
\put(3,0){$\cdots$}

\put(-4.5,0.3){\circle{0.5}}
\put(-3.5,0.3){\circle{0.5}}
\put(-2.5,0.3){\circle*{0.5}}
\put(-1.5,0.3){\circle*{0.5}}
\put(-0.5,0.3){\circle*{0.5}}

\put(0,-0.5){\line(0,1){1.3}}

\put(0.5,0.3){\circle*{0.5}}
\put(1.5,0.3){\circle*{0.5}}
\put(2.5,0.3){\circle*{0.5}}

\end{picture} 
)= & \hspace{0.47cm}
 \setlength{\unitlength}{0.4cm}
 \frac12 \hspace{2.4cm}
 \begin{picture}(4.5,0.3)

\put(-6,0){$\cdots$}
\put(3,0){$\cdots$}

\put(-4.5,0.3){\circle*{0.5}}
\put(-3.5,0.3){\circle{0.5}}
\put(-2.5,0.3){\circle{0.5}}
\put(-1.5,0.3){\circle*{0.5}}
\put(-0.5,0.3){\circle*{0.5}}

\put(0,-0.5){\line(0,1){1.3}}

\put(0.5,0.3){\circle*{0.5}}
\put(1.5,0.3){\circle*{0.5}}
\put(2.5,0.3){\circle*{0.5}}

\end{picture} \\
&+  \setlength{\unitlength}{0.4cm}
 \frac12   \hspace{2.4cm}
\begin{picture}(4.5,0.3)

\put(-6,0){$\cdots$}
\put(3,0){$\cdots$}

\put(-4.5,0.3){\circle{0.5}}
\put(-3.5,0.3){\circle*{0.5}}
\put(-2.5,0.3){\circle*{0.5}}
\put(-1.5,0.3){\circle{0.5}}
\put(-0.5,0.3){\circle*{0.5}}

\put(0,-0.5){\line(0,1){1.3}}

\put(0.5,0.3){\circle*{0.5}}
\put(1.5,0.3){\circle*{0.5}}
\put(2.5,0.3){\circle*{0.5}}

\end{picture} \\
&+
 \setlength{\unitlength}{0.4cm}
 \frac12 \hspace{2.4cm}
 \begin{picture}(4.5,0.3)

\put(-6,0){$\cdots$}
\put(3,0){$\cdots$}

\put(-4.5,0.3){\circle*{0.5}}
\put(-3.5,0.3){\circle{0.5}}
\put(-2.5,0.3){\circle{0.5}}
\put(-1.5,0.3){\circle*{0.5}}
\put(-0.5,0.3){\circle*{0.5}}

\put(0,-0.5){\line(0,1){1.3}}

\put(0.5,0.3){\circle*{0.5}}
\put(1.5,0.3){\circle*{0.5}}
\put(2.5,0.3){\circle*{0.5}}

\end{picture} \\
&-  \setlength{\unitlength}{0.4cm}
 \frac12   \hspace{2.4cm}
\begin{picture}(4.5,0.3)

\put(-6,0){$\cdots$}
\put(3,0){$\cdots$}

\put(-4.5,0.3){\circle{0.5}}
\put(-3.5,0.3){\circle*{0.5}}
\put(-2.5,0.3){\circle*{0.5}}
\put(-1.5,0.3){\circle{0.5}}
\put(-0.5,0.3){\circle*{0.5}}

\put(0,-0.5){\line(0,1){1.3}}

\put(0.5,0.3){\circle*{0.5}}
\put(1.5,0.3){\circle*{0.5}}
\put(2.5,0.3){\circle*{0.5}}

\end{picture} 
\end{aligned}
$$
Subtracting leaves exactly the right side of \eqref{eq:exBC1}. 
\end{Example}

\begin{Comment}
It may seem unfortunate that we need to introduce the power series \eqref{field1}-\eqref{field4}, but these are in fact very useful. Each acts on any $v \in \Fock$ to give a Laurent power series in $z$ with coefficients in $\Fock$, so each of the products below is well-defined as a map from $\Fock$ to Laurent power series in two variables with coefficients in $\Fock$. The following commutation relations follow from results in \cite[Chapter 14]{Kac} (see also \cite[Appendix B2]{OR2}).
\begin{align}
& \Gamma_+(x)\Gamma_-(y) = (1-xy) \Gamma_-(y) \Gamma_+(x) \\
& \Gamma_+(x) \psi(z)= (1-z^{-1}x)^{-1} \psi(z) \Gamma_+(x) \\
& \Gamma_-(x) \psi(z) = (1-xz)^{-1} \psi(z) \Gamma_-(x) \\
& \Gamma_+(x) \psi^*(z)= (1-z^{-1}x) \psi^*(z) \Gamma_+(x) \\
& \Gamma_-(x) \psi^*(z) = (1-xz) \psi^*(z) \Gamma_-(x).
\end{align}
Here $(1-a)^{-1}$ is always expanded as $1+a+a^2+ \cdots$. Equality is interpreted as saying that, once each side is applied to any fixed $v \in \Fock$, all coefficients of the resulting power series agree. These facts have been put to great use. For example, in \cite{OR1} Okounkov and Reshetikhin interpret the $\Gamma_\pm(z)$ as transition functions, and use the above commutation relations to find limit shapes and correlation functions for systems of random plane partitions. 
\end{Comment}

\subsection{$\Fock$ as a representation of $\fgl_{\bz+\frac12}$ and $a_\infty$} \label{F-a}

\begin{Definition}
A Lie-associative map from a Lie algebra $\g$ to an associative algebra $A$ is a map $\sigma$ such that, for all $X, Y \in \g$, $\sigma([X,Y])= \sigma(X) \sigma(Y)-\sigma(Y) \sigma(X)$.
\end{Definition}
An action of a Lie algebra $L$ on a space $V$ is a map $L \rightarrow \End(V)$ which preserves the Lie bracket, which is the same thing as a Lie-Associative map  from $L$ to $\End(V)$. More generally, if an associative algebra $A$ acts on $V$, then any Lie-Associative map $L \rightarrow A$ induces an acton of $L$ on $V$. In the following we use this framework to construct actions of various Lie algebras on $\Fock$ by finding Lie-Associative maps from these algebras to $\Cl$ or the completion $\widetilde \Cl$. 

\begin{Definition}
$M_{\bz+\frac12}$ is the algebra of matrices with rows and columns indexed by $\bz+\frac12$, in which all but finitely many entries are 0. Let $E_{m,n}$ denote the matrix with a single 1 in position $(m,n)$ and zeros everywhere else.
\end{Definition}

\begin{Definition}
$\fgl_{\bz+\frac12}$ is the Lie algebra corresponding to $M_{\bz+\frac12}$. That is, $\fgl_{\bz+\frac12}$ is equal to $M_{\bz+\frac12}$ as a vector space, and the Lie bracket is defined by
\begin{equation*}
[X,Y]= XY-YX.
\end{equation*}
\end{Definition}

\begin{Proposition} \label{prop:glia} (see \cite[\S14.9]{Kac})
There is a Lie-associative embedding of $\fgl_{\bz+\frac12}$ into $\Cl$ given by
$E_{m,n} \rightarrow \psi_m \psi^*_n$, and hence an action of $\fgl_{\bz+\frac12}$  on $\Fock$. 
\end{Proposition}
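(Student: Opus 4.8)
The plan is to extend $\sigma$ linearly from the prescription $E_{m,n} \mapsto \psi_m \psi_n^*$, verify the Lie-associative identity on the basis $\{E_{m,n}\}$ of $\fgl_{\bz+\frac12}$, and then check injectivity; the induced action on $\Fock$ is automatic from the general framework preceding the proposition, since $\Cl$ already acts on $\Fock$ and a Lie-associative map into an acting algebra induces a Lie-algebra action. Because both sides of $\sigma([X,Y]) = \sigma(X)\sigma(Y) - \sigma(Y)\sigma(X)$ are bilinear in $(X,Y)$, it suffices to test it on pairs $X = E_{m,n}$, $Y = E_{p,q}$. On the matrix side one has $[E_{m,n}, E_{p,q}] = \delta_{n,p} E_{m,q} - \delta_{q,m} E_{p,n}$, so the left-hand side is $\delta_{n,p}\psi_m\psi_q^* - \delta_{q,m}\psi_p\psi_n^*$.

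The substantive step is to compute the right-hand side $\psi_m\psi_n^*\psi_p\psi_q^* - \psi_p\psi_q^*\psi_m\psi_n^*$ directly from the Clifford relations. Rewriting \eqref{Cliff3} as $\psi_n^*\psi_p = \delta_{n,p} - \psi_p\psi_n^*$ and $\psi_q^*\psi_m = \delta_{q,m} - \psi_m\psi_q^*$ extracts exactly the two $\delta$-terms appearing on the matrix side. The leftover quartic terms are $-\psi_m\psi_p\psi_n^*\psi_q^*$ and $+\psi_p\psi_m\psi_q^*\psi_n^*$, and these cancel because $\psi_m\psi_p = -\psi_p\psi_m$ and $\psi_n^*\psi_q^* = -\psi_q^*\psi_n^*$ together give $\psi_m\psi_p\psi_n^*\psi_q^* = \psi_p\psi_m\psi_q^*\psi_n^*$. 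This cancellation of the degree-four part is the one place where the anticommutativity relations are really used, and I expect it to be the crux of the argument, though it is short.

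For the embedding claim it suffices to show that the images $\{\psi_m\psi_n^*\}_{m,n \in \bz+\frac12}$ are linearly independent in $\Cl$, since $\sigma$ is a linear map defined on a basis. Each product $\psi_m\psi_n^*$ is already normally ordered (creation before annihilation), and distinct pairs $(m,n)$ give distinct such monomials, which are part of a PBW-type basis of $\Cl$. Equivalently, one may test independence through the action on $\Fock$: the operator $\psi_m\psi_n^*$ sends a Maya diagram in which position $n$ is filled and position $m$ is empty to $\pm$ the diagram with position $n$ emptied and $m$ filled (and acts diagonally when $m=n$), and distinct valid pairs $(m,n)$ produce distinct basis vectors, so no nontrivial linear combination can vanish.

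Combining these, $\sigma$ is an injective Lie-associative map $\fgl_{\bz+\frac12} \to \Cl$, and composing with the action of $\Cl$ on $\Fock$ from \S\ref{F_ferm} yields the asserted action of $\fgl_{\bz+\frac12}$ on $\Fock$.
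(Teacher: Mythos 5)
Your proof is correct and follows essentially the same route as the paper, which (deferring to Kac \S 14.9) simply illustrates the bracket check $[\psi_1\psi_2^*,\psi_3\psi_1^*]=-\psi_3\psi_2^*$ on an example and remarks that the general case is ``relations like this in more generality''---your computation $[\psi_m\psi_n^*,\psi_p\psi_q^*]=\delta_{n,p}\psi_m\psi_q^*-\delta_{q,m}\psi_p\psi_n^*$, with the quartic terms cancelling by anticommutativity, is exactly that general verification. You also supply the linear-independence argument for the word ``embedding,'' which the paper omits entirely; that is a welcome completion rather than a different approach.
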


\begin{Example}
Both $\fgl_{\bz+\frac12}$ and $\Cl$ are associative algebras, so one might hope that the map in Proposition \ref{prop:glia} was a map of associative algebra. However, 
$$E_{12}E_{31} =0 \quad \text{but}  \quad \psi_1\psi_2^* \psi_3\psi_1^*  = \psi_1\psi_3\psi_1^* \psi_2^* \neq 0.$$
Working with these as Lie algebras though, 
$$ E_{12}E_{31} - E_{31}E_{12}  = - E_{32}$$
and
$$ \psi_1\psi_2^* \psi_3\psi_1^* -   \psi_3\psi_1^* \psi_1\psi_2^* = 
 \psi_2^* \psi_3 \psi_1\psi_1^* + \psi_2^*   \psi_3\psi_1^* \psi_1
 = \psi_2^* \psi_3(\psi_1\psi_1^* + \psi_1^* \psi_1)= \psi_2^* \psi_3 =- \psi_3 \psi_2^*,$$
 and the map respects the bracket. One can prove Proposition \ref{prop:glia} by checking relations like this in more generality.
\end{Example}

We would like to extend the action of $\fgl_{\bz+\frac12}$ on $\Fock$ from Proposition \ref{prop:glia} to a larger algebra that contains some matrices with infinitely many non-zero entries. To do this, we will need to first take a central extension. The reason for this extension is explained in Comment \ref{def:cgli} once we've defined a few things.

\begin{Definition} \label{make_fgc} (see \cite[\S7.12]{Kac})
Let $\fgl_{\bz+\frac12}^c$ be the central extension of $\fgl_{\bz+\frac12}$ by a central element $c$ with
Lie bracket defined as follows. We use the notation $\overline X$ to mean the matrix $X$ thought of as an element of $\fgl^c_{\bz+\frac12}$. 
\begin{align*}
[\overline E_{m,n}, \overline E_{p,q}] = 
\begin{cases}
\overline{E_{m,n} E_{p,q}-E_{p,q} E_{m,n}} + \delta_{m,q} \delta_{n,p}  c \quad \text{ if }  m <0 \text{ and } n>0,  \\
\overline{ E_{m,n}  E_{p,q}-E_{p,q}  E_{m,n} }- \delta_{m,q} \delta_{n,p}  c \quad \text{ if }  m > 0 \text{ and } n< 0,  \\
\overline{ E_{m,n}  E_{p,q}- E_{p,q} E_{m,n}} \quad \quad \quad \quad \quad \quad  \text{ if }  m \text{ and } n \text{ have the same sign.} 
\end{cases}
\end{align*}
One can verify by direct calculation that the bracket on $\fgl_{\bz+\frac12}^c$ satisfies the Jacobi identify, so is a Lie algebra, and the quotient by $c=0$ is $\fgl_{\bz+\frac12}$ so it is a central extension.
\end{Definition}

\begin{Comment} \label{trivialize} 
This central extension is trivial since there is an isomorphism of Lie algebras $ \fgl^c_{\bz+\frac12} \rightarrow \fgl_{\bz+\frac12} \oplus \bc c $ given by
\begin{equation*}
\overline E_{m,n} \mapsto
\begin{cases}
 E_{m,n}  \qquad \quad  \text{ if } m \neq n \text{ or } m>0 \\
 E_{m,n}- c \quad  \ \text{ if } m=n \text{ and } m<0.
\end{cases}
\end{equation*}
\end{Comment}

\begin{Proposition} \label{how_ext_acts}  
There is a Lie-associative imbedding  of $\fgl^c_{\bz+\frac12}$ into $\Cl$ defined by
\begin{align}
 \overline E_{m,n} & \mapsto \psi_m \psi^*_n \quad \text{ if } m \neq n \text{ or } m >0, \\ 
 \overline E_{n,n} & \mapsto -\psi^*_n \psi_n \quad \text{ if } n < 0, \\
 c & \rightarrow 1.  
\end{align}
In particular, the action of $\Cl$ on $\Fock$ introduces an action of $\fgl^c_{\bz+\frac12}$ on $\Fock$.
\end{Proposition}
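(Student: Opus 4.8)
The plan is to avoid checking all three bracket cases of Definition~\ref{make_fgc} by hand, and instead to deduce the statement from the already-established Proposition~\ref{prop:glia} together with the explicit trivialization recorded in Comment~\ref{trivialize}. Write $\tau\colon \fgl_{\bz+\frac12}\to\Cl$, $E_{m,n}\mapsto\psi_m\psi^*_n$, for the Lie-associative embedding of Proposition~\ref{prop:glia}, and extend it to $\hat\tau\colon \fgl_{\bz+\frac12}\oplus\bc c\to\Cl$ by setting $\hat\tau(c)=1$. Since $c$ is central in $\fgl_{\bz+\frac12}\oplus\bc c$ and $1$ is central in $\Cl$, a one-line expansion shows that $\hat\tau$ still satisfies the Lie-associative identity $\hat\tau([X,Y])=\hat\tau(X)\hat\tau(Y)-\hat\tau(Y)\hat\tau(X)$ (the scalar contributions cancel). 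Composing with the Lie-algebra isomorphism $\phi\colon \fgl^c_{\bz+\frac12}\to\fgl_{\bz+\frac12}\oplus\bc c$ of Comment~\ref{trivialize} gives $\hat\tau\circ\phi$, and precomposing a Lie-associative map with a Lie homomorphism again yields a Lie-associative map, so $\hat\tau\circ\phi\colon \fgl^c_{\bz+\frac12}\to\Cl$ is Lie-associative.

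It then remains to verify that $\hat\tau\circ\phi$ is exactly the map written in the statement. For $m\neq n$ or $m>0$ we have $\phi(\overline E_{m,n})=E_{m,n}$, hence $\hat\tau\phi(\overline E_{m,n})=\psi_m\psi^*_n$, as claimed. For $n<0$ we have $\phi(\overline E_{n,n})=E_{n,n}-c$, so $\hat\tau\phi(\overline E_{n,n})=\psi_n\psi^*_n-1$; applying the Clifford relation \eqref{Cliff3} with $m=n$ rewrites this as $-\psi^*_n\psi_n$, matching the statement, and $c\mapsto 1$ by construction. This identifies the two maps, so the claimed assignment is Lie-associative, and the action of $\Cl$ on $\Fock$ then restricts along it to the asserted action of $\fgl^c_{\bz+\frac12}$.

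For the word \emph{imbedding} one needs injectivity; since $\phi$ is an isomorphism it suffices that $\hat\tau$ is injective. This follows from the length-grading on $\Cl$: injectivity of $\tau$ is part of Proposition~\ref{prop:glia}, while $1$ has length $0$ and every $\psi_m\psi^*_n$ has length $2$ in the normally-ordered monomial basis, so $1$ is independent from the image of $\tau$. The only genuinely delicate point in the whole argument is the sign in the diagonal correction: the slick route hides it inside Comment~\ref{trivialize}, but it surfaces concretely as the identity $\psi_n\psi^*_n-1=-\psi^*_n\psi_n$, which is precisely what forces the choice $\overline E_{n,n}\mapsto-\psi^*_n\psi_n$ for $n<0$. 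I expect this sign-bookkeeping to be the main obstacle; it is also the crux of the fully self-contained alternative, in which one instead expands $[\psi_m\psi^*_n,\psi_p\psi^*_q]$ (and its diagonal variants) directly with the Clifford relations and checks that the scalar central term appears, with the correct sign, exactly when $m<0,n>0$ or $m>0,n<0$ with $\{m,n\}=\{q,p\}$.
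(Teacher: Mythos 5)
Your proposal is correct and takes essentially the same route as the paper: the paper's proof likewise combines the embedding of Proposition~\ref{prop:glia}, extended by letting $c$ act as $1$, with the trivialization of Comment~\ref{trivialize}, and uses the Clifford relation \eqref{Cliff3} to rewrite $\psi_n\psi^*_n-1$ as $-\psi^*_n\psi_n$ on the negative diagonal. Your extra checks (that adjoining $c\mapsto 1$ preserves the Lie-associative identity, and injectivity via the normally ordered monomial basis) are details the paper leaves implicit, not a different argument.
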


\begin{proof}
Consider the action of  $\fgl_{\bz+\frac12} \oplus \bc c$ on $\Fock$ by allowing $\fgl_{\bz+\frac12}$ to act via the embedding from Proposition \ref{prop:glia}, with $c$ acting as 1. Then use the isomorphism from Comment \ref{trivialize}. For $m \neq n$ or $m >0$, the action of $\overline E_{m,n}$ is immediate. For $n$ negative, $\overline E_{n,n}$ is sent to $E_{n,n}-c$, which acts as  $\psi_n \psi_n^*-1$. By the commutation relation \eqref{Cliff3} this is equal to $-\psi^*_n \psi_n$.
\end{proof}

\begin{Definition} \label{def:cgli} The Lie algebra
$\widetilde \fgl_{\bz+\frac12}$ consists of all infinite matrices $(a_{i,j})_{i,j \in \bz+\frac12}$ such that, for some $k$, $a_{i,j}=0$ whenever $|i-j|>k$. The Lie bracket is the standard bracket for matrices, $[X,Y]=XY-YX$. One can check that matrix multiplication is well-defined on the set of such matrices and hence so is the Lie bracket.
\end{Definition}

\begin{Definition}  \label{def:ca}
$a_\infty$ is the central extension of $\widetilde \fgl_{\bz+\frac12}$ defined as in Definition \ref{make_fgc} by
\begin{align*}
[\overline E_{m,n}, \overline E_{p,q}] = 
\begin{cases}
\overline{E_{m,n} E_{p,q}-E_{p,q} E_{m,n}} + \delta_{m,q} \delta_{n,p}  c \quad \text{ if }  m <0 \text{ and } n>0,  \\
\overline{ E_{m,n}  E_{p,q}-E_{p,q}  E_{m,n} }- \delta_{m,q} \delta_{n,p}  c \quad \text{ if }  m > 0 \text{ and } n< 0,  \\
\overline{ E_{m,n}  E_{p,q}- E_{p,q} E_{m,n}} \quad \quad \quad \quad \quad \quad  \text{ if }  m \text{ and } n \text{ have the same sign.} 
\end{cases}
\end{align*} Any $(a_{mn}) \in a_\infty$ has only finitely many non-zero entries with $m<0$ and $n>0$, so the coefficient of $c$ that appears in any bracket is finite and the central extension is well-defined. 
\end{Definition}

\begin{Proposition} \label{ain}
The imbedding of $\fgl^c_{\bz+\frac12}$ into $\Cl$ extends to an embedding of $a_\infty$ into $\widetilde \Cl$. In particular, $\Fock$ is  a representation of $a_\infty$.
\end{Proposition}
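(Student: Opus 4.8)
The plan is to define the extension $\sigma\colon a_\infty\to\widetilde\Cl$ by exactly the formulas of Proposition~\ref{how_ext_acts}, now applied to band matrices: writing $:\psi_m\psi^*_n:$ for the normal-ordered product (equal to $\psi_m\psi^*_n$ unless $m=n<0$, in which case it is $-\psi^*_n\psi_n=\psi_n\psi^*_n-1$), set $\sigma(\overline E_{m,n})=\ :\psi_m\psi^*_n:$ and $\sigma(c)=1$, extended linearly. Three things must then be checked: that $\sigma(X)$ genuinely lands in $\widetilde\Cl$ for every $X\in a_\infty$; that $\sigma$ is Lie-associative; and that it is injective. The first and third are bookkeeping, and the real content is the second, where the finite-bandwidth hypothesis of Definition~\ref{def:cgli} and the central extension of Definition~\ref{def:ca} must cooperate.

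For well-definedness, fix $X=(a_{m,n})$ with $a_{m,n}=0$ whenever $|m-n|>k$, and sort its nonzero entries by the signs of $m,n$. Every entry with $n>0$ contributes $\big(\sum_m a_{m,n}\psi_m\big)\psi^*_n=Y_n\psi^*_n$ with $Y_n\in\Cl$ a finite sum (bandwidth), giving a positive tail $\sum_{n>0}Y_n\psi^*_n$. Every entry with $m<0$ and $n<0$ contributes, after rewriting $\psi_m\psi^*_n=-\psi^*_n\psi_m$ off the diagonal and $-\psi^*_n\psi_n$ on it, a negative tail $\sum_{m<0}X_m\psi_m$ with $X_m=-\sum_n a_{m,n}\psi^*_n\in\Cl$. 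The only entries left have $m>0$ and $n<0$, whence $|m-n|=m-n\ge 1$, so by the bandwidth bound there are only finitely many and they contribute an element of $\Cl$. Adding $\sigma(\lambda c)=\lambda$, we obtain an expression of exactly the shape $z+\sum_{m<N}X_m\psi_m+\sum_{m>M}Y_m\psi^*_m$, so $\sigma(X+\lambda c)\in\widetilde\Cl$. The normal ordering on the negative diagonal is precisely what avoids the divergent constant flagged in the comment following Proposition~\ref{finding-Bosons}.

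For Lie-associativity the core is the bracket of two bilinears. Ignoring normal ordering, the Clifford relations give
\[
[\psi_m\psi^*_n,\ \psi_p\psi^*_q]=\delta_{n,p}\,\psi_m\psi^*_q-\delta_{m,q}\,\psi_p\psi^*_n,
\]
which carries no central term and reproduces the plain matrix bracket $[E_{m,n},E_{p,q}]$. The cocycle of Definition~\ref{def:ca} materializes only upon re-expressing the right-hand side in normal-ordered form: since $\psi_m\psi^*_q=\ :\psi_m\psi^*_q:+\delta_{m,q}\mathbf 1_{\{m<0\}}$, the leftover constant is $\delta_{n,p}\delta_{m,q}(\mathbf 1_{\{m<0\}}-\mathbf 1_{\{p<0\}})$, which is $+1$ for $m<0,\ n>0$, equal to $-1$ for $m>0,\ n<0$, and $0$ when $m,n$ share a sign --- exactly the three cases of Definition~\ref{def:ca}. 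This verifies $\sigma([\,\overline E_{m,n},\overline E_{p,q}\,])=[\sigma(\overline E_{m,n}),\sigma(\overline E_{p,q})]$ on generators, i.e.\ the finite-matrix content of Proposition~\ref{how_ext_acts}. To pass to arbitrary $X,Y\in a_\infty$ one extends bilinearly; the sums are now infinite, but $[X,Y]$ again has finite bandwidth and a \emph{finite} central coefficient (only finitely many entries have $m<0,\ n>0$), and since the action of $\widetilde\Cl$ on $\Fock$ is faithful (as for $\Cl$) and every fixed $v\in\Fock$ is annihilated by all but finitely many $\psi^*_m,\psi_{-m}$, it suffices to check the identity after applying both operators to a basis vector, where all sums truncate and the statement reduces to the generator case. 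I expect this reconciliation --- confirming that the a priori infinite sums collapse to the single finite cocycle term and nothing more --- to be the main obstacle, and it is exactly the point forcing the central extension.

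Finally, injectivity follows by reading off matrix coefficients: for $m\neq n$ the operator $\sigma(X+\lambda c)$ has coefficient $\pm a_{m,n}$ between a Maya-diagram basis vector with a bead in position $n$ and the vector obtained by moving that bead to $m$, recovering every off-diagonal $a_{m,n}$, while comparing diagonal coefficients across two charge sectors separates the $a_{m,m}$ from $\lambda$. Hence $\sigma$ is an embedding of $a_\infty$ into $\widetilde\Cl$, and composing with the action of $\widetilde\Cl$ on $\Fock$ makes $\Fock$ a representation of $a_\infty$.
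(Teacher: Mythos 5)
Your proposal is correct, and its core observation --- that the formulas of Proposition \ref{how_ext_acts}, applied entrywise to a matrix of finite bandwidth, produce a sum of exactly the shape $z+\sum_{m<N}X_m\psi_m+\sum_{m>M}Y_m\psi^*_m$ --- is in fact the paper's \emph{entire} proof, which consists of the single remark that the image of any element of $a_\infty$ has the correct form to lie in the completion $\widetilde\Cl$. Where you differ is in what you verify beyond this. First, you re-derive the cocycle by direct Clifford computation, $[\psi_m\psi^*_n,\psi_p\psi^*_q]=\delta_{n,p}\psi_m\psi^*_q-\delta_{m,q}\psi_p\psi^*_n$, with the central term emerging when the right side is put back in normal-ordered form; the paper instead obtains the finite-matrix statement by transporting the $\fgl_{\bz+\frac12}\oplus\bc c$ action through the trivializing isomorphism of Comment \ref{trivialize}, and never revisits the bracket when passing to $a_\infty$. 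Your computation is closer to the paper's informal Comment following Proposition \ref{ain}, and it buys something the paper's route does not: it exhibits exactly where the three sign cases of Definition \ref{def:ca} come from. Second, you address two points the paper leaves implicit --- that the bracket identity survives the passage to infinite sums, and that the map is injective --- and both arguments are sound, the injectivity check via single-bead-move matrix coefficients and comparison of charge sectors in particular. The one assertion you lean on without proof is faithfulness of $\widetilde\Cl$ on $\Fock$ (``as for $\Cl$''): for $\Cl$ this follows from simplicity of the finite-rank Clifford subalgebras, but for the completion it is a genuinely separate claim, amounting to the statement that an infinite sum of the displayed form acting by zero on $\Fock$ has all coefficients zero. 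Given how informally the paper defines $\widetilde\Cl$ this is a forgivable gloss, and it can be avoided entirely by checking the bracket identity coefficientwise in the formal sums (bandwidth makes each coefficient a finite computation), but it should be flagged as an assumption rather than cited as known.
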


\begin{proof}
This follows from the definition of $\widetilde \Cl$, since the image of any element in $a_\infty$ is of the correct form to lie in the completion $\widetilde \Cl$.
\end{proof}

\begin{Comment}
Essentially, the central extension is needed because, using the action in Proposition \ref{prop:glia}, the action of a sum of the form $\displaystyle \sum_{m <0} x_m E_{m,m}$ on $v \in \Fock$ typically diverges. One fixes this by making a change of basis in $\fgl^c_{\bz+\frac12}$: Instead of working with the elements $E_{m,m}$, work with the elements 
\begin{equation} \label{bE-ing}
\overline E_{m,m}= 
\begin{cases}
E_{m,n}  \quad m \neq n \text{ or } m>0, \\
E_{m,m}-c \quad m = n <0.
\end{cases}
\end{equation}
Then, for any fixed $v \in \Fock$ and $k>0$, there are only finitely many $\overline E_{m,n}$ with $|m-n|<k$ that act on $v$ to give a non-zero term. One can think of $a_\infty$ as infinite linear combinations of these $\overline E_{m,n}$ (along with c) subject to the condition that, for some $k$, the coefficient of $\overline E_{m,n}$ is $0$ whenever $|i-j|>k$. Then the action of $\fgl^c_{\bz+\frac12}$ extends naturally to $a_\infty$. 
The bracket in Definitions \ref{make_fgc} and \ref{def:ca} can be derived by taking the normal bracket of the elements $\overline E_{m,n}$ as in \eqref{bE-ing}. The formulas in Proposition \ref{how_ext_acts} come from the fact that $\psi_m \psi_m^*- 1 = - \psi_m^* \psi_m$.

This discussion also implies that $a_\infty$ is a non-trivial central extension of $\widetilde \fgl_{\bz+\frac12}$, since the action only works with the central extension.
 \end{Comment}

\begin{Comment}  \label{aaa}
The images of the $\alpha_m$ in $\widetilde \Cl$ from Proposition \ref{finding-Bosons} are naturally contained in the image of $a_\infty$; they are matrices with all 1 on one diagonal (other than the main diagonal), and all $0$ everywhere else. One can directly calculate that these matrices have the same commutation relations as the generators of $\cW$, which in fact proves that the map from Proposition \ref{finding-Bosons} gives an action of $\cW$ on $\Fock$. The element $\alpha_0$ mentioned in Comment \ref{com:a0} is the image of $\sum_{m \in \bz+\frac12} \overline E_{m,m}$. 
\end{Comment}

\subsection{$\Fock$ as a representation of $\asl_\ell$ and $\agl_\ell$} \label{F_aff}

It turns out that each $\Fock^{(h)}$ carries an action of $\asl_\ell$. $\Fock^{(h)}$ is not irreducible under this action, but all the irreducible components are very similar; they are isomorphic as representations of the derived algebra $\asl_\ell'$. This is explained by the fact that there is a larger affine algebra $\agl_\ell$ which acts on $\Fock^{(h)}$ and $\Fock^{(h)}$ is irreducible under the action of $\agl_\ell$. 

\begin{Definition}
Let $\fsl_\ell$ be the Lie algebra of $\ell \times \ell$ matrices with trace 0 and $\fgl_\ell$ the Lie algebra of all $\ell \times \ell$ matrices. We use the notation $X_{i,j}$ to denote the matrix with a $1$ in position $(i,j)$ and zeros everywhere else (this is to distinguish $X_{i,j} \in  \fgl_\ell$ from $E_{m,n} \in \fgl_{\bz+\frac12}$). 
\end{Definition}

\begin{Definition} \label{asl-def1}
The affine Lie algebra $\asl_\ell$ is, as a vector space $(\fsl_\ell \otimes_\bc \bc[t,t^{-1}]) \oplus \bc c \oplus \bc d$. The Lie bracket is defined by
\begin{align}
[X \otimes t^n, Y \otimes t^m] & = (XY-YX) \otimes t^{m+n} + n \delta_{n,-m} \mathrm{tr}(XY) c,  
 \\
[d, X \otimes t^n] & = n X \otimes t^n, \\
c & \text{ is central.} 
\end{align}
Here $\mathrm{tr}(XY)$ is the trace of $XY$ (which is also a renormalization of the Killing form).\end{Definition}

\begin{Definition}
$\asl_\ell'$ is the derived algebra of $\asl_\ell$. As a vector space, $\asl'_\ell=(\fsl_\ell \otimes \bc[t,t^{-1}] )\oplus \bc c$.
\end{Definition}

\begin{Proposition} \label{asl-def2} (see \cite[Chapter 7.4 and Exercise 7.11]{Kac}) 
$\asl_\ell$ is isomorphic to the affine Kac-Moody algebra with the $\ell$-node Dynkin diagram 
\vspace{0.2cm}
\begin{center}
 \setlength{\unitlength}{0.4cm}

\begin{picture}(6,2)

\put(-1,0){\circle*{0.5}}
\put(0,0){\circle*{0.5}}
\put(1,0){\circle*{0.5}}
\put(5,0){\circle*{0.5}}
\put(6,0){\circle*{0.5}}
\put(7,0){\circle*{0.5}}

\put(3,2){\circle*{0.5}}

\put(3,2){\line(2,-1){4}}
\put(3,2){\line(-2,-1){4}}

\put(-1,0){\line(1,0){1}}
\put(0,0){\line(1,0){1}}
\put(1,0){\line(1,0){1}}
\put(4,0){\line(1,0){1}}
\put(5,0){\line(1,0){1}}
\put(6,0){\line(1,0){1}}

\put(2.6,0){\ldots}

\end{picture}
\end{center}
\vspace{0.1cm}
In particular, 
$\asl_\ell'$ is generated by Chevalley generators $\tilde E_{\bar i}, \tilde F_{\bar i}$ for $i \in \bz/\ell \bz$. Furthermore, the isomorphism can be chosen such that the following hold:
\begin{align}
\label{eq:e1} &\tilde E_{\bar i} \rightarrow X_{i,i+1} \otimes t^0 \quad i=1, \ldots \ell-1 \\
 &\tilde F_{\bar i} \rightarrow X_{i+1, i} \otimes t^0 \quad i=1, \ldots \ell-1 \\
\label{eq:e2} &\tilde E_{\bar 0} \rightarrow X_{\ell,1} \otimes t^1  \\
&\tilde F_{\bar 0} \rightarrow X_{1,\ell} \otimes t^{-1}. 
\end{align}
\end{Proposition}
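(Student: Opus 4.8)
The plan is to realize the loop algebra of Definition \ref{asl-def1} as the abstract affine Kac-Moody algebra $\g(A)$ attached to the $A_{\ell-1}^{(1)}$ generalized Cartan matrix $A$ (the $\ell\times\ell$ affine Cartan matrix whose Dynkin diagram is the displayed $\ell$-cycle), and then to verify this realization is an isomorphism. Recall that $\g(A)$ is generated by Chevalley generators $e_i,f_i$ and coroots $\alpha_i^\vee$ for $i\in\{0,1,\dots,\ell-1\}$ together with a scaling element $d$, subject to the relations determined by $A$ and the Serre relations. To set up the map, write $E_i=X_{i,i+1}$, $F_i=X_{i+1,i}$, $H_i=X_{i,i}-X_{i+1,i+1}$ for the standard Chevalley generators of $\fsl_\ell$, and let $\theta=\alpha_1+\cdots+\alpha_{\ell-1}$ be the highest root, with root vectors $E_\theta=X_{1,\ell}$, $F_\theta=X_{\ell,1}$ and coroot $H_\theta=X_{1,1}-X_{\ell,\ell}$. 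I would define $\phi$ on generators by $e_i\mapsto E_i\otimes t^0$, $f_i\mapsto F_i\otimes t^0$, $\alpha_i^\vee\mapsto H_i\otimes t^0$ for $i=1,\dots,\ell-1$, and on the affine node by $e_0\mapsto F_\theta\otimes t$, $f_0\mapsto E_\theta\otimes t^{-1}$, $\alpha_0^\vee\mapsto c-H_\theta\otimes t^0$, and $d\mapsto d$; this is exactly the assignment \eqref{eq:e1}--\eqref{eq:e2}.

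Next I would check that $\phi$ respects all the defining relations, so that it is a well-defined homomorphism. The relations among the finite nodes $1,\dots,\ell-1$ hold automatically because $\fsl_\ell\otimes t^0$ is a copy of $\fsl_\ell$. The essential new input concerns the affine node. Using the bracket of Definition \ref{asl-def1},
$$[e_0,f_0]\ \mapsto\ [F_\theta\otimes t,\,E_\theta\otimes t^{-1}]=[F_\theta,E_\theta]\otimes t^0+\tr(F_\theta E_\theta)\,c=-H_\theta\otimes t^0+c,$$
which is precisely the image of $\alpha_0^\vee$; this is where the cocycle $n\delta_{n,-m}\tr(XY)c$ is responsible for producing the affine coroot. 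One then checks that the brackets $[\alpha_i^\vee,e_0]$ and $[\alpha_i^\vee,f_0]$ reproduce the entries of $A$ (so that node $0$ is joined to nodes $1$ and $\ell-1$, closing the cycle), and that the Serre relations $(\ad e_0)^{1-a_{0j}}e_j=0$ and their $f$-analogues hold. Under $\phi$ the latter become identities among matrix units, e.g. $(\ad(X_{\ell,1}\otimes t))^2(X_{1,2}\otimes t^0)=0$, which follow from elementary relations in $\fsl_\ell$ together with $\theta$ being the highest root. The action of $d$ matches because $[d,\cdot]$ records the power of $t$, which is the affine grading.

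Finally I would prove $\phi$ is an isomorphism. Surjectivity is direct: the images of the finite Chevalley generators generate $\fsl_\ell\otimes t^0$, bracketing repeatedly with $e_0$ and $f_0$ raises and lowers the power of $t$ to produce all of $\fsl_\ell\otimes\bc[t,t^{-1}]$, and $c,d$ lie in the image by construction. For injectivity I would observe that $\ker\phi$ is an ideal of $\g(A)$ graded by the root lattice, and that $\phi$ is injective on the Cartan subalgebra $\mathfrak{h}$ (the images $H_1\otimes t^0,\dots,H_{\ell-1}\otimes t^0,c,d$ are linearly independent), whence $\ker\phi\cap\mathfrak{h}=0$. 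By the defining property of $\g(A)$ — that it has no nonzero ideal meeting $\mathfrak{h}$ trivially, which for symmetrizable $A$ is the content of the Gabber--Kac theorem — it follows that $\ker\phi=0$.

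The main obstacle is this last injectivity step. Verifying the relations and surjectivity is routine matrix-unit bookkeeping, but ruling out hidden relations genuinely requires the structure theory of Kac-Moody algebras (triviality of the maximal ideal meeting $\mathfrak{h}$ trivially), and it is precisely here that symmetrizability of the affine Cartan matrix is used. This is why the statement is attributed to \cite{Kac} rather than proved from first principles here.
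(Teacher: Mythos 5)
Your proposal is correct and is essentially the proof the paper relies on: the paper gives no argument of its own for this proposition, deferring entirely to \cite[Chapter 7.4 and Exercise 7.11]{Kac}, and your sketch (generator-by-generator definition of the map, relation check with the cocycle producing $\alpha_0^\vee \mapsto c - H_\theta \otimes t^0$, surjectivity by bracketing with the affine node to shift $t$-degree, injectivity from the fact that no nonzero ideal of the Kac-Moody algebra meets the Cartan subalgebra trivially) is precisely the standard argument given in that reference. You also correctly isolate the Gabber--Kac/structure-theoretic input as the one non-elementary step, which is exactly why the paper cites rather than proves this statement.
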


\begin{Proposition} \label{F_asll} (see \cite[Chapter 14]{Kac}) 
\begin{enumerate}
\item \label{slt1} There is a Lie-associative embedding of $\asl_\ell'/(c-1)$ into $a_\infty$  given by, for any $X_{i,j} \in \text{sl}_\ell
$ and $m \in {\Bbb Z}$, 
 \begin{equation*}
 X_{i,j} \otimes t^m \mapsto \sum_{k \in \bz} \overline E_{i-\frac{1}{2}+k\ell, j-\frac{1}{2}+k\ell+m\ell}.
 \end{equation*}
 Thus, using the embedding $a_\infty \subset \widetilde{Cl}$, $\Fock$ carries an action of $\asl_\ell'$. 

\item \label{slt2} Each $\Fock^{(h)}$ is preserved by the action of $\asl_\ell'$.

\item \label{slt3} Every irreducible component of $\Fock^{(h)}$ under this action is isomorphic to the highest weight representation $V_{\Lambda_{\bar h }}$ as a representation of $\asl_\ell'$.
\end{enumerate}
\end{Proposition}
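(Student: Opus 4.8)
The plan is to handle the three parts in turn, doing the honest computations for (i) and (ii) and reducing (iii) to the structure theory of integrable highest weight modules.

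\textbf{Part (i).} First I would check that the proposed image of $X_{i,j}\otimes t^m$, namely $A_{i,j}^m := \sum_{k\in\bz}\overline E_{i-\frac12+k\ell,\,j-\frac12+k\ell+m\ell}$, actually lies in $a_\infty$: all its nonzero entries sit on the single diagonal where (column $-$ row) $=(j-i)+m\ell$, so the bandwidth condition of Definition \ref{def:cgli} holds, and only finitely many of its entries have negative row and positive column, so the coefficient of $c$ in any bracket is finite. The heart of the argument is to evaluate $[A_{i,j}^n, A_{p,q}^m]$ using the $a_\infty$-bracket of Definition \ref{def:ca} and match it against the image of the $\asl_\ell'$-bracket. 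Since $1\le i,j,p,q\le\ell$, the product $\overline E_{M,N}\overline E_{P,Q}$ forces $N=P$, which pins down $j=p$ and collapses the double sum; the non-central part then equals the image of $(\delta_{j,p}X_{i,q}-\delta_{i,q}X_{p,j})\otimes t^{n+m}=(X_{i,j}X_{p,q}-X_{p,q}X_{i,j})\otimes t^{n+m}$, exactly as required. The only delicate point is the central term: it is nonzero only when $i=q$, $j=p$ and $m=-n$, and then one must sum $\pm1$ over the finitely many $k$ for which the two positions $i-\frac12+k\ell$ and $j-\frac12+(k+n)\ell$ straddle $0$ with opposite signs; counting these indices gives exactly $n$, matching $n\,\delta_{n,-m}\tr(X_{i,j}X_{p,q})\,c$. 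Injectivity is immediate since distinct basis elements land on distinct diagonals, or on distinct residues of a common diagonal, and $c\mapsto c$. Composing with $a_\infty\subset\widetilde{\Cl}$ (Proposition \ref{ain}), where $c$ acts as $1$, yields an action of $\asl_\ell'$ on $\Fock$ with $c$ acting as $1$, i.e. of $\asl_\ell'/(c-1)$.

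\textbf{Part (ii).} Each $A_{i,j}^m$ maps into $\widetilde{\Cl}$ as a sum of operators $\psi_{M}\psi^*_{N}$ (with the sign/diagonal adjustment of Proposition \ref{how_ext_acts}). In the Maya-diagram picture each such term deletes a black bead at $N$ and inserts one at $M$, so it preserves the total number of beads, and hence the charge. Equivalently, the image of $\asl_\ell'$ commutes with the charge operator $\alpha_0$ of Comment \ref{com:a0}, whose eigenspaces are precisely the $\Fock^{(h)}$; so each $\Fock^{(h)}$ is invariant.

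\textbf{Part (iii), setup.} I would first show $\Fock^{(h)}$ is an integrable, level-$1$ module. The $\fsl_2$-triple attached to node $\bar i$ splits, over the disjoint pairs of positions $\{i-\frac12+k\ell,\,i+\frac12+k\ell\}$, into a direct sum of two-dimensional and trivial $\fsl_2$-representations; on any fixed basis vector only finitely many pairs are ``mixed'', so $\tilde E_{\bar i}$ and $\tilde F_{\bar i}$ act locally nilpotently. Next I would identify $|\emptyset,h\rangle$ as a highest weight vector: in the vacuum all filled positions are contiguous, so no bead can move left and $\tilde E_{\bar i}|\emptyset,h\rangle=0$ for every $i$; since $\tilde F_{\bar i}|\emptyset,h\rangle\neq0$ exactly when $\bar i=\bar h$, a short computation gives $\alpha_{\bar i}^\vee|\emptyset,h\rangle=\delta_{\bar i,\bar h}|\emptyset,h\rangle$, i.e. weight $\Lambda_{\bar h}$.

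\textbf{Part (iii), the obstacle.} The genuine difficulty is that, as an $\asl_\ell'$-module, $\Fock^{(h)}$ has infinite multiplicities (its $\Lambda_{\bar h}$-weight space is infinite-dimensional), so it is not in category $\mathcal{O}$ and one cannot directly invoke complete reducibility. I would fix this by enlarging the action to all of $\asl_\ell$: letting $d$ act as (minus) the box-counting energy operator grades each $\Fock^{(h)}$ with finite-dimensional pieces, putting it in category $\mathcal{O}$ with weights bounded above. Complete reducibility of integrable category-$\mathcal{O}$ modules then gives $\Fock^{(h)}\cong\bigoplus_s V_{\Lambda^{(s)}}$ with each $\Lambda^{(s)}$ dominant of level $1$. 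The classification of level-$1$ dominant integral weights of $A^{(1)}_{\ell-1}$, together with the observation that every weight of $\Fock^{(h)}$ lies in the single coset $\Lambda_{\bar h}+Q+\bz\delta$ (each colored box shifts the weight by a root), forces $\Lambda^{(s)}=\Lambda_{\bar h}-s\delta$. Finally, since $\delta$ annihilates the Cartan of $\asl_\ell'$, restriction gives $V_{\Lambda_{\bar h}-s\delta}|_{\asl_\ell'}\cong V_{\Lambda_{\bar h}}$, still irreducible over $\asl_\ell'$; hence every irreducible $\asl_\ell'$-component is $V_{\Lambda_{\bar h}}$. The main work — and where I expect to lean on \cite[Chapter 14]{Kac} — is exactly this complete-reducibility/identification step and, implicitly, the commuting Heisenberg (the $\agl_\ell$ of the section title) that organizes the infinite multiplicity space.
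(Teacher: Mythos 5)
Your parts (i) and (ii) follow the same route as the paper, which disposes of them in one line each: part (i) is ``directly check the relations'' (you have actually carried this out, correctly handling the one delicate point, namely the signed count over the finitely many $k$ for which the diagonal entries straddle $0$, which produces the central term $n\,\delta_{n,-m}\mathrm{tr}(X_{i,j}X_{p,q})\,c$), and part (ii) is exactly your observation that each $\psi_M\psi^*_N$ preserves charge. Part (iii) is where you genuinely diverge. The paper postpones (iii) and deduces it from the $\agl_\ell$ picture: by Proposition \ref{goes_to_gl}, each $\Fock^{(h)}$ is irreducible over $\agl_\ell'$ (a fact whose proof runs through the bosonic side --- irreducibility of $\Bock^{(h)}$ over the Weyl algebra $\cW$ and the realization of the $\alpha_k$ inside $\agl_\ell'$), and since $\agl_\ell'=\asl_\ell'\oplus\Hi\oplus\bc I$ with centers identified, the irreducible module $\Fock^{(h)}$ factors as $V\otimes W$ with $V$ an irreducible $\asl_\ell'$-module and $W$ an irreducible $\Hi$-module; every irreducible $\asl_\ell'$-component is then a copy of $V$, which is $V_{\Lambda_{\bar h}}$ because of the highest weight vector $|\emptyset,h\rangle$. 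You instead stay entirely inside standard Kac--Moody theory: integrability via the local $\fsl_2$-pairs, the $d$-grading to get finite-dimensional weight spaces with weights bounded above (so $\Fock^{(h)}$ is an integrable module in category $\mathcal{O}$), complete reducibility of such modules, the classification of level-one dominant weights plus the root-lattice coset argument forcing the highest weights to be $\Lambda_{\bar h}-s\delta$, and restriction to $\asl_\ell'$ to kill the $\delta$-shifts. Both arguments are correct. Yours is independent of the Weyl-algebra/bosonic input and of Proposition \ref{goes_to_gl} entirely, at the price of invoking the complete reducibility theorem as a black box (note it is \cite[Chapter 10]{Kac}, not Chapter 14); the paper's route avoids that theorem and yields strictly more structure, identifying the multiplicity space as the irreducible Fock module of the Heisenberg algebra $\Hi$, which is precisely the point of the $\agl_\ell$ discussion that follows. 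One small point in your favor: your sign for $d$ (minus the number of $c_0$-colored boxes) is the one consistent with the relation $[d,X\otimes t^n]=nX\otimes t^n$ and with weights of the form $\Lambda_{\bar h}-\beta$, $\beta\in Q_+$; the paper's Proposition \ref{asll_with_d} states the opposite sign.
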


\begin{proof}
Part \eqref{slt1} can be proven by directly checking that the relations in Definition \ref{asl-def1} hold. Part \eqref{slt2} follows from the fact that the image of any $E_{ij}$ in the map from $a_\infty$ to $\widetilde{Cl}$ preserves each $F^{(h)}$. We delay the proof of part \eqref{slt3} since it follows from the discussion of the $\agl_\ell$ case below. \end{proof}

The action of the Chevalley generators of $\asl_\ell'$ on $\Fock$ can be described combinatorially:

\begin{Proposition} 
\label{comb_sll}
Let $(\lambda, m)$ be a charged partition and fix $\bar i \in \bz/\ell \bz$. Then
\begin{align} \label{eq:Fa}
\tilde E_{\bar i} |\lambda,h \rangle&= \sum_{\small \begin{array}{c} \lambda \backslash \mu \text{ is an} \\ \bar i \text{ colored box } \end{array}} |\mu,h \rangle  & \text{and} \hspace{0.7cm} & \tilde F_{\bar i} |\lambda,h \rangle  &= \sum_{\small \begin{array}{c} \mu \backslash \lambda \text{ is an} \\ \bar i \text{ colored box } \end{array}} |\mu,h \rangle.
\end{align}
Here the boxes are colored as in Figure \ref{partition_bij}.
\end{Proposition}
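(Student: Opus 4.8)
The plan is to unwind the $\asl_\ell'$-action all the way down to Maya diagrams, composing the Chevalley-generator images from Proposition~\ref{asl-def2} with the embedding $\asl_\ell'/(c-1)\hookrightarrow a_\infty\subset\widetilde{\Cl}$ of Proposition~\ref{F_asll}\eqref{slt1} and the formula $\overline E_{m,n}\mapsto\psi_m\psi^*_n$ of Proposition~\ref{how_ext_acts}. First I would record how a single monomial $\psi_m\psi^*_n$ acts on the standard basis. Reading $\Fock$ in the wedge model of \S\ref{wedge_section}, $\psi^*_n$ deletes the factor $e_n$ (returning $0$ if position $n$ is white) and $\psi_m$ then wedges on $e_m$ (returning $0$ if position $m$ is already black). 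Hence, up to a sign, $\psi_m\psi^*_n$ moves a black bead from position $n$ to position $m$ when $n$ is black and $m$ is white, and is $0$ otherwise; in particular it preserves the number of black beads, hence the charge.

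Next I would combine this with the two displayed embeddings. For $i=1,\dots,\ell-1$ the generator $\tilde E_{\bar i}$ maps to $X_{i,i+1}\otimes t^0$, hence to $\sum_{k\in\bz}\psi_{i-\frac12+k\ell}\,\psi^*_{i+\frac12+k\ell}$, whose $k$-th term tries to move a bead one step right, from $i+\frac12+k\ell$ to $i-\frac12+k\ell$. For the affine node $\tilde E_{\bar0}\mapsto X_{\ell,1}\otimes t^1\mapsto\sum_k\psi_{\ell-\frac12+k\ell}\,\psi^*_{\frac12+(k+1)\ell}$, and one checks the two indices again differ by exactly $1$, so once more a single bead moves one step right. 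Using the bead positions $\{\lambda_k+h-k+\tfrac12:k\ge1\}$ of the charged partition $(\lambda,h)$, such a single-step rightward move past an adjacent white bead is exactly the removal of a removable box of $\lambda$, and the removed box has horizontal position $n-\tfrac12$, where $n$ is the starting bead index. For $n=i+\frac12+k\ell$ (resp. $n=\frac12+(k+1)\ell$) this equals $i+k\ell$ (resp. $(k+1)\ell$), so its color is $\bar i$ (resp. $\bar0$) in the sense of Figure~\ref{partition_bij}. Thus the nonzero terms are indexed precisely by removable boxes of color $\bar i$; the identical analysis applied to $\tilde F_{\bar i}\mapsto X_{i+1,i}\otimes t^0$ and $\tilde F_{\bar0}\mapsto X_{1,\ell}\otimes t^{-1}$, with beads moving one step left, gives the addable boxes of color $\bar i$. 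Only finitely many boxes are removable or addable, so each infinite sum reduces to finitely many nonzero terms and everything is well defined in $\widetilde{\Cl}$.

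The one genuine point is that every such term contributes with coefficient exactly $+1$, and this is where I expect the only real work. Writing a normally ordered wedge $v$ with $e_n$ its $a$-th factor, applying $\psi^*_n$ requires commuting $e_n$ to the front at a cost of $(-1)^{a-1}$; after deletion, wedging on $e_{n-1}$ and restoring normal order costs another $(-1)^{a-1}$, since $e_{n-1}$ must be reinserted past exactly the same $a-1$ factors lying above it. Here I use crucially that the move is by a single step, so no black bead lies strictly between positions $n-1$ and $n$; the two signs then cancel and the coefficient is $+1$. The same cancellation holds for the affine node, whose move is also a single step.

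Finally I would note that distinct values of $k$ produce distinct boxes, hence distinct, non-cancelling basis vectors $|\mu,h\rangle$ with the same charge $h$, matching the right-hand sides. Assembling these observations yields \eqref{eq:Fa}. The main obstacle is purely the sign bookkeeping of the preceding paragraph; the remainder is a direct translation through the chain of embeddings already established in Propositions~\ref{how_ext_acts}, \ref{ain}, and \ref{F_asll}.
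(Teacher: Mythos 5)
Your proposal is correct and follows essentially the same route as the paper: pushing the Chevalley generators through Proposition~\ref{asl-def2}, the embedding of Proposition~\ref{F_asll}\eqref{slt1}, and Proposition~\ref{how_ext_acts} to get $\sum_{k}\psi_{i-\frac12+k\ell}\psi^*_{i+\frac12+k\ell}$, and then interpreting each term on Maya diagrams as removal (resp.\ addition) of an $\bar i$-colored box. The only difference is that you carry out the sign bookkeeping showing each coefficient is $+1$, which the paper asserts implicitly; your cancellation argument for the two factors of $(-1)^{a-1}$ is a correct filling-in of that step.
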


\begin{proof}
We will explain the case of $\tilde E_{\bar i}$, the case of $\tilde F_{\bar i}$ being similar. Combining formulas \eqref{eq:e1} and \eqref{eq:e2} with Proposition \ref{F_asll},
\begin{equation}
\tilde E_{\bar i} |\lambda,h \rangle = \sum_{k \in {\Bbb Z}} \overline E_{i-\frac{1}{2}+k\ell, i+\frac{1}{2}+k\ell}  |\lambda,h \rangle .
\end{equation}
Using Proposition \ref{how_ext_acts}, this is
\begin{equation} \label{eq:hea}
 \sum_{k \in {\Bbb Z}} \psi_{i-\frac{1}{2}+k\ell}\psi^*_{i+\frac{1}{2}+k\ell}  |\lambda,h \rangle 
\end{equation}
Using the Maya diagram model of Fock space and the bijection in Figure \ref{partition_bij}, 
$ \psi_{i-\frac{1}{2}+k\ell}\psi^*_{i+\frac{1}{2}+k\ell}$ acts an $|\lambda, h \rangle$ by removing a box in position $i+k \ell$, if possible, and it sends $|\lambda, h \rangle$ to zero otherwise. So, using \eqref{eq:hea}, $\tilde E_{\bar i}$ is the sum of all ways to remove $\bar i$ colored boxes, as stated. 
\end{proof}

\begin{Proposition} \label{asll_with_d}
The action of $\asl_\ell'$ on $\Fock$ can be extended to an action of $\asl_\ell$ on $\Fock$ by letting $d$ act on $|\lambda,h \rangle$ as multiplication by 
$$\mathrm{Num}_0 (|\lambda,h \rangle):= \# \text{ squares of } \lambda \text{ colored } c_0,$$
where the coloring is as in Figure \ref{partition_bij}. 
\end{Proposition}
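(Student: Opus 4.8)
The plan is to check that adjoining the diagonal operator $D := \mathrm{Num}_0$ to the existing action of $\asl_\ell'$ produces an action of the full $\asl_\ell$. Write $\rho$ for the action of $\asl_\ell'$ from Proposition \ref{F_asll}. Since $\asl_\ell = \asl_\ell' \oplus \bc d$ as a vector space and the only bracket relations of $\asl_\ell$ that are not internal to $\asl_\ell'$ are $[d, X\otimes t^n] = n\,X\otimes t^n$ together with the centrality of $c$, it suffices to verify that, as operators on $\Fock$,
$$[D, \rho(g)] = \rho(\ad(d)\,g) \qquad \text{for all } g \in \asl_\ell',$$
where $\ad(d)$ acts on $\fsl_\ell \otimes t^n$ as multiplication by $n$. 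The relation $[D, \rho(c)] = 0$ is automatic, since $c$ acts as the scalar $1$.

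First I would reduce this to the Chevalley generators. The set $S$ of $g \in \asl_\ell'$ satisfying the displayed identity is visibly a linear subspace, and it is closed under the bracket: if $g_1, g_2 \in S$, then using that $\rho$ is a Lie homomorphism, the Jacobi identity, and that $\ad(d)$ is a derivation,
$$[D, \rho([g_1,g_2])] = \bigl[[D,\rho(g_1)],\rho(g_2)\bigr] + \bigl[\rho(g_1),[D,\rho(g_2)]\bigr] = \rho(\ad(d)[g_1,g_2]).$$
Hence $S$ is a Lie subalgebra, and since the $\tilde E_{\bar i}, \tilde F_{\bar i}$ generate $\asl_\ell'$ (Proposition \ref{asl-def2}) it is enough to check the identity on these generators.

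On generators the computation becomes purely combinatorial. From \eqref{eq:e1}–\eqref{eq:e2}, the generators $\tilde E_{\bar i}, \tilde F_{\bar i}$ for $i = 1, \ldots, \ell-1$ have $t$-degree $0$, while $\tilde E_{\bar 0}$ has $t$-degree $+1$ and $\tilde F_{\bar 0}$ has $t$-degree $-1$; these are the values $\ad(d)$ must reproduce. By Proposition \ref{comb_sll}, $\tilde E_{\bar i}$ removes a $c_{\bar i}$-colored box and $\tilde F_{\bar i}$ adds one, so since $D$ is diagonal and counts $c_0$-colored boxes, it commutes with every $\tilde E_{\bar i}, \tilde F_{\bar i}$ for $i \neq 0$ (such operators leave the number of $c_0$ boxes unchanged), giving the required $0$ on the degree-$0$ generators. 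For the remaining two, every term of $\tilde E_{\bar 0}|\lambda,h\rangle$ has one fewer $c_0$ box and every term of $\tilde F_{\bar 0}|\lambda,h\rangle$ one more, so the standard diagonal-versus-ladder commutator yields $[D, \tilde E_{\bar 0}] = -\tilde E_{\bar 0}$ and $[D, \tilde F_{\bar 0}] = +\tilde F_{\bar 0}$.

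The main obstacle is precisely this final sign bookkeeping. The natural computation above matches $\ad(d)$ only up to an overall sign, so the genuine point to nail down is whether the operator implementing $\ad(d)$ with the correct sign is $\mathrm{Num}_0$ or $-\mathrm{Num}_0$; this must be reconciled against the degree conventions in Definition \ref{asl-def1} and \eqref{eq:e2}. To settle it unambiguously I would trace the precise correspondence between adding/removing a $c_0$-colored box and the explicit embedding $X_{\ell,1}\otimes t^1 \mapsto \sum_{k} \psi_{k\ell - \frac12}\psi^*_{k\ell+\frac12}$ from Proposition \ref{F_asll}, since that is exactly where the $d$-grading convention and the box-coloring convention of Figure \ref{partition_bij} must be made to agree. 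Everything else in the argument is formal.
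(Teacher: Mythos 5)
Your overall strategy is the same as the paper's: reduce to the Chevalley generators and then check the commutators combinatorially using Proposition \ref{comb_sll}. Your subalgebra-closure argument justifying the reduction is a worthwhile addition (the paper just asserts that checking the generators suffices), and your computations agree exactly with what the paper verifies: the paper checks that $d$ commutes with $\tilde E_{\bar i}, \tilde F_{\bar i}$ for $\bar i \neq \bar 0$, and that $\tilde E_{\bar 0} d = (d+1)\tilde E_{\bar 0}$ and $\tilde F_{\bar 0} d = (d-1)\tilde F_{\bar 0}$, which are precisely your $[\mathrm{Num}_0, \tilde E_{\bar 0}] = -\tilde E_{\bar 0}$ and $[\mathrm{Num}_0, \tilde F_{\bar 0}] = +\tilde F_{\bar 0}$.

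The gap is that you stop at the sign question instead of settling it, even though you already have both sides computed and there is nothing left to ``trace.'' By \eqref{eq:e2}, $\tilde E_{\bar 0} = X_{\ell,1}\otimes t^1$, so Definition \ref{asl-def1} demands $[d,\tilde E_{\bar 0}] = +\tilde E_{\bar 0}$ and $[d,\tilde F_{\bar 0}] = -\tilde F_{\bar 0}$; your combinatorial computation gives the opposite signs for $\mathrm{Num}_0$. The conclusion, which you should have stated, is that with the paper's conventions the operator implementing $\ad(d)$ is $-\mathrm{Num}_0$, not $\mathrm{Num}_0$: taking $d \mapsto \mathrm{Num}_0$ yields an action of the Lie algebra obtained from $\asl_\ell$ by $d \mapsto -d$ (isomorphic to $\asl_\ell$, but not satisfying the displayed relation $[d, X\otimes t^n] = nX\otimes t^n$). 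In other words, your suspicion points at a genuine sign slip in the proposition itself, and the paper's own proof contains the same slip: the relations it declares sufficient ($\tilde E_{\bar 0}d=(d+1)\tilde E_{\bar 0}$, $\tilde F_{\bar 0}d=(d-1)\tilde F_{\bar 0}$) are those for $-d$, and they are never reconciled against Definition \ref{asl-def1} and \eqref{eq:e2}. The fix is to let $d$ act as $-\mathrm{Num}_0$ (the standard convention, under which the $d$-eigenvalues on a highest weight module are non-positive), or to flip one of the conventions. Had you carried your plan through to the end, you would have produced a complete and correct proof of the corrected statement, which is better than what the paper prints.
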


\begin{proof}
Using the expression for the Chevalley generators from Proposition \ref{asl-def2},
it suffices to prove that, as operators on $\Fock$, $d$ commutes with $E_{\bar i}$, $F_{\bar i}$ for $\bar i \neq \bar 0$, that $E_{\bar 0} d = (d+1) E_{\bar 0}$, and that $F_{\bar 0} d  = (d-1) F_{\bar 0}$. The relations involving $E_{\bar i}, F_{\bar i}$ for $\bar i \neq \bar 0$ hold because these operators do not affect the number of $c_{\bar 0}$ colored boxed. The last relations can be seen from the fact that $E_{\bar 0}$ removes a $\bar 0$ colored box from $\lambda$ and $F_{\bar 0}$ adds such a box. 
\end{proof}

The action of $\asl_\ell$ on $\Fock^{(h)}$ can in fact be extended to a larger affine algebra $\agl_\ell$.

\begin{Definition} \label{agl-def1}
The affine Lie algebra $\agl_\ell$ is, as a vector space $(\fgl_\ell \otimes_\bc \bc[t,t^{-1}]) \oplus \bc c \oplus \bc d$. The Lie bracket is defined by
\begin{align}
[X \otimes t^n, Y \otimes t^m] & = (XY-YX) \otimes t^{m+n} + n \delta_{n,-m} \mathrm{tr}(XY) c \\
[d, X \otimes t^n] & = n X \otimes t^n \\
c & \text{ is central.} 
\end{align}
Let
$\agl_\ell'$ denote the Lie-subalgebra $(\fgl_\ell \otimes_\bc \bc[t,t^{-1}]) \oplus \bc c$.
\end{Definition}

\begin{Proposition} \label{goes_to_gl}\hspace{-0.6cm} .
\begin{enumerate}
\item \label{glt1} There is an embedding of $\agl_\ell'/(c-1)$ into $a_\infty$ (and from there into $\widetilde{Cl}$) given by, for $X_{i,j} \in \agl_\ell$, 
 \begin{equation*}
 X_{i,j} \otimes t^m \mapsto \sum_{k \in \bz} \overline E_{i-\frac{1}{2}+k\ell, j-\frac{1}{2}+k\ell+m\ell}. 
 \end{equation*}
 Thus $\Fock$ carries an action of $\agl_\ell'$ which agrees with the action from Proposition \ref{F_asll} on the subalgebra $\asl_\ell'$.
 
 \item \label{glt2} The action can be extended to all of $\agl_\ell$
 by letting $d$ act on $|\lambda, h \rangle$ as multiplication by the number of $c_{\bar 0}$ colored boxes in $\lambda$. 
 
 \item  \label{glt3} Each $\Fock^{(h)}$ is preserved by this action, and is irreducible as a representation of $\agl_\ell$.  

 \end{enumerate}
 
\end{Proposition}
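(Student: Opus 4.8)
The plan is to treat the three parts in turn, using the splitting $\fgl_\ell = \fsl_\ell \oplus \bc I$ (with $I$ the $\ell\times\ell$ identity), so that everything not already supplied by Proposition \ref{F_asll} and Proposition \ref{asll_with_d} is controlled by the ``trace direction'' $I \otimes t^m$. For part \eqref{glt1} I would verify directly that the assignment respects the bracket of Definition \ref{agl-def1}, exactly as in Proposition \ref{F_asll}\eqref{slt1} but now permitting the diagonal generators $X_{i,i}$. Expanding $[X_{i,j}\otimes t^n, X_{p,q}\otimes t^m]$ as a double sum of brackets $[\overline E_{a,b}, \overline E_{a',b'}]$ in $a_\infty$, the ``matrix part'' reassembles into the image of $(X_{i,j}X_{p,q}-X_{p,q}X_{i,j})\otimes t^{n+m}$, while the central terms of Definition \ref{def:ca} contribute $\pm c$ exactly at the finitely many summands whose two indices straddle $0$. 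Summing these signs yields $n\,\delta_{n,-m}\,\delta_{j,p}\delta_{i,q}\,c = n\,\delta_{n,-m}\,\mathrm{tr}(X_{i,j}X_{p,q})\,c$, matching the trace cocycle; the only genuinely new check beyond the $\fsl_\ell$ case is that this count is correct when one of the generators is diagonal. Each image matrix is supported on a single shifted diagonal, hence lies in $a_\infty$, and agreement with the $\asl'_\ell$ action is immediate from the identical formula. The observation to record for later is that the trace direction produces the bosonic operators: $I\otimes t^m \mapsto \sum_{p\in\bz+\frac12}\overline E_{p,p+m\ell} = \pi_\Fock(\alpha_{m\ell})$ for $m\neq 0$ by Proposition \ref{finding-Bosons}, while $I\otimes t^0 \mapsto \sum_p \overline E_{p,p} = \alpha_0$ as in Comment \ref{aaa}.

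For part \eqref{glt2}, since $d$ is already diagonal it suffices to check $[d, X_{i,j}\otimes t^n] = n\,X_{i,j}\otimes t^n$ on generators. This is precisely the computation behind Proposition \ref{asll_with_d}: every surviving term of $X_{i,j}\otimes t^n$ acts on a Maya diagram by moving a single black bead, and the coloring rule of Figure \ref{partition_bij} shows that such a move changes the number of $c_{\bar 0}$ colored boxes by exactly $n$, independently of which term survives. Thus $d$ together with the $\agl'_\ell$ action satisfies the remaining relations of Definition \ref{agl-def1}, and the representation extends to all of $\agl_\ell$.

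Part \eqref{glt3} is the heart of the matter. Preservation of each $\Fock^{(h)}$ is immediate, since every generator maps to a sum of charge preserving operators $\psi_a\psi_b^*$ and $d$ is diagonal, as in Proposition \ref{F_asll}\eqref{slt2}. For irreducibility I would exploit the decomposition just identified: the operators $\alpha_{m\ell}$ for $m\neq 0$ generate a Heisenberg algebra $\mathcal{H}_\ell$ that commutes with $\asl'_\ell$ (because $I$ is central in $\fgl_\ell$ and $\mathrm{tr}(IX)=0$ for $X\in\fsl_\ell$), and together $\asl'_\ell$, $\mathcal{H}_\ell$, $\alpha_0$ and $d$ generate the image of $\agl_\ell$. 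The vacuum $|\emptyset,h\rangle$ is an $\ell$-core, hence a highest weight vector annihilated by every $\tilde E_{\bar i}$, so the $\asl'_\ell$ submodule it generates is the irreducible $V_{\Lambda_{\bar h}}$. I would then establish $\Fock^{(h)} \cong V_{\Lambda_{\bar h}}\otimes M$ as a module for $\asl'_\ell \times \mathcal{H}_\ell$, with $M$ the multiplicity space; since $\mathcal{H}_\ell$ acts on $M$ as an irreducible bosonic Fock module (Proposition \ref{W-acts}) and $V_{\Lambda_{\bar h}}$ is irreducible under $\asl'_\ell$, the tensor product is irreducible under $\agl_\ell$. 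This simultaneously proves Proposition \ref{F_asll}\eqref{slt3}, since every $\asl'_\ell$ component is then forced to be a copy of $V_{\Lambda_{\bar h}}$.

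I expect the genuine obstacle to be the tensor factorization in the last step — equivalently, showing that $M$ is a single free $\mathcal{H}_\ell$ module, so that the commuting Heisenberg operators permute the $\asl'_\ell$ isotypic copies transitively and without relations. This is exactly the content I would draw from \cite[Chapter 14]{Kac}; the two irreducibility inputs it feeds into, namely irreducibility of $V_{\Lambda_{\bar h}}$ under $\asl'_\ell$ and of the bosonic Fock module under a Heisenberg algebra, are standard.
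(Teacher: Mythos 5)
Your parts (i) and (ii) are fine and coincide with the paper's treatment: the paper disposes of both with ``follow exactly as in the $\asl_\ell$ case,'' which is what your bracket check and your $d$-commutation argument amount to. The genuine gap is in part (iii). Inside the image of $\agl_\ell'$ you only locate the Heisenberg algebra $\mathcal{H}_\ell$ spanned by the $\alpha_{m\ell}$ (images of $I\otimes t^m$), and this subalgebra is too small to force irreducibility on its own; the entire weight of your argument then falls on the factorization $\Fock^{(h)}\cong V_{\Lambda_{\bar h}}\otimes M$ with $M$ irreducible over $\mathcal{H}_\ell$, which you do not prove but defer to \cite[Chapter 14]{Kac}. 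Your citation of Proposition \ref{W-acts} for the irreducibility of $M$ is a misapplication: that proposition concerns the full Weyl algebra $\cW$ (all $\alpha_k$, $k\in\bz_{\neq 0}$) acting on $\Bock^{(h)}$, not the proper subalgebra $\{\alpha_{m\ell}\}$ acting on a multiplicity space, and nothing in the paper identifies $M$ with a bosonic Fock module. There is also a structural inversion: in the paper the isotypic decomposition (Proposition \ref{F_asll}(iii)) and the tensor splitting $\Fock^{(h)}\cong V\otimes W$ are \emph{consequences} of Proposition \ref{goes_to_gl}(iii), so building \ref{goes_to_gl}(iii) on top of that decomposition either imports the hard content wholesale from Kac or becomes circular within the paper's own development.

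What you missed is that the image of $\agl_\ell'$ contains not just the $\alpha_{m\ell}$ but \emph{every} $\alpha_k$, $k\in\bz_{\neq 0}$, and this makes the whole tensor-factorization apparatus unnecessary. For each residue $\bar j$ the paper takes the cyclic-shift element $C_{\bar j}=\sum_{a=1}^{\ell}X_{a,a+j}$ (with the convention that the term is $tX_{a,a+j-\ell}$ when $a+j>\ell$); unwinding the embedding of part (i), one computes
\begin{equation*}
C_{\bar k}\otimes t^{\lfloor k/\ell\rfloor}\;\mapsto\;\sum_{p\in\bz+\frac12}\overline E_{p,\,p+k},
\end{equation*}
which by Proposition \ref{finding-Bosons} (and Comment \ref{aaa}) is exactly $\pi_\Fock(\alpha_k)$, for every $k\neq 0$ and not merely $k\equiv 0 \pmod \ell$. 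Since $\Fock^{(h)}$ is irreducible under $\cW$ (Proposition \ref{W-acts} transported through the isomorphism $\sigma$ of Proposition \ref{Fock-sym}), any nonzero $\agl_\ell$-invariant subspace of $\Fock^{(h)}$ is $\cW$-invariant and hence all of $\Fock^{(h)}$. This argument needs no integrability, no characters, and no multiplicity-space analysis; it is the step your proposal is missing.
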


\begin{proof}
Parts \eqref{glt1} and \eqref{glt2} follow exactly as in the $\asl_\ell$ case. Each $\Fock^{(h)}$ is preserved under the action of $\agl_\ell$, so it remains to show that each $\Fock^{(h)}$ is irreducible. For this, we use the fact that $\Fock^{(h)}$ is irreducible as a representation of $\cW$. Thus it suffices to show that for all $k$, $\pi_\Fock \alpha_k$  is in the algebra of operators generated by $\agl_\ell$. For each residue $\bar j$ mod $\ell$, let 
\begin{equation}
C_{\bar j} = \sum_{a=1}^\ell 
\begin{cases}X_{a,a+j} \text{ if } a+j \leq \ell \\
t X_{a,a+j-\ell} \text{ if } a+j > \ell.  
\end{cases}
\end{equation}
Here $j$ is the representative of $\bar j$ in $\{ 0,1, \ldots \ell-1\}$. 
One then directly checks that, for each $k \neq 0$, the actions of $C_{\bar k} \otimes t^{\lfloor k/\ell \rfloor}$ and $\alpha_k$ on $\Fock$ agree.
\end{proof}

\begin{Example}
If $\ell=3$, the elements $C_{\bar j}$ as in the proof of Proposition \ref{goes_to_gl} are  
\begin{equation}
C_{\bar 0}= 
\left(
\begin{array}{ccc}
1&0&0 \\
0&1&0 \\
0&0&1 \\
\end{array}
\right), \quad
C_{\bar 1}= 
\left(
\begin{array}{ccc}
0&1&0 \\
0&0&1 \\
t&0&0 \\
\end{array}
\right), \quad
C_{\bar 2}= 
\left(
\begin{array}{ccc}
0&0&1 \\
t&0&0 \\
0&t&0 \\
\end{array}
\right).
\end{equation}
As in the proof of Proposition \ref{goes_to_gl}, e.g. $\alpha_7$ should correspond to $C_{\bar 1} \otimes t^2$. By definition, 
\begin{equation} 
C_{\bar 1}  \otimes t^2 =
\left(
\begin{array}{ccc}
0&t^2&0 \\
0&0&t^2 \\
t^3&0&0 \\
\end{array}
\right)
= X_{31}t^3+ X_{12}t^2+X_{21} t^2.
\end{equation}
Under the map defined in Proposition \ref{goes_to_gl}, this is sent to 
$$
\sum_{k \in \bz} \overline  E_{3-\frac{1}{2}+3k, 1-\frac{1}{2}+3k+9}
+
\sum_{k \in \bz} \overline  E_{1-\frac{1}{2}+3k, 2-\frac{1}{2}+3k+6}
+
\sum_{k \in \bz} \overline  E_{2-\frac{1}{2}+3k, 3-\frac{1}{2}+3k+6}.
$$
Combining terms and reparametrizing the sums, this is
$$
\sum_{k' \in \bz} \overline  E_{-\frac{1}{2}+k', -\frac{1}{2}+k'+7},
$$
which by Proposition \ref{how_ext_acts} acts on $\Fock$ as
$$
\sum_{k' \in \bz} \psi_{-\frac{1}{2}+k'} \psi^*_{ -\frac{1}{2}+k'+7}.
$$
By Proposition \ref{finding-Bosons} this is exactly the action of $\alpha_7$. 
\end{Example}

One can directly check that the elements $I \otimes t^k \in \agl_\ell'$ commute with all the generators of $\asl_\ell'$. Furthermore, these elements for $k \neq 0$ generate a copy of $\Hi$, where the central element $c_\Hi$ in $\Hi$ is identified with $ \ell c$. This is because, by direct calculation, 
\begin{equation}
[I \otimes t^k, I \otimes t^{-k}]= k \ell  c.
\end{equation}
Together with $I$ itself, these generate all of $\agl_\ell'$, so 
$\agl_\ell'=\asl_\ell' \oplus \Hi \oplus \bc {\bf I}/( \ell c_{\asl_\ell}=c_\Hi)$.

To complete the proof of Proposition \ref{F_asll}\eqref{slt3} use the fact that, by Proposition \ref{goes_to_gl}, each $\Fock^{(h)}$ is an irreducible representation of $\agl_\ell'$, so is a tensor product $V \otimes W $ of an irreducible representation $V$ of $\asl_\ell'$ and an irreducible representation $W$ of $\Hi$.

\subsection{The $q$-deformed Fock space $\Fock_q$ as a representation of $U_q(\asl_\ell)$} \label{MM_section}

We now describe the Misra-Miwa Fock space for $U_q(\asl_\ell)$. This is a representation of $U_q(\asl_\ell)$ originally developed by Misra and Miwa \cite{MM} using work of Hayashi \cite{Hay} (see also \cite[Chapter 10]{Ari}). It can be thought of as a $q$-deformation of the action of $\asl_\ell$ on $\Fock$ described in \S\ref{F_aff}. See e.g. \cite{CP} for the definition of $U_q(\asl_\ell)$. Note that we use $\tilde E_{\bar i}$ and $\tilde F_{\bar i}$ to denote the Chevalley generators, which, in \cite{CP}, are denoted by $X_{i}^+$ and $X_i^-$. 

\begin{Comment}
Caution: $q$ here is unrelated to the $q$ in the definition of bosonic Fock space in \S\ref{ss:BF}. It is however the normal notation so we will accept this conflict. 
\end{Comment}

\begin{Definition}
Let $\Fock_q := \Fock \otimes_\bc \bc(q)$.
\end{Definition}

\begin{Definition} \label{MM_number}
Let $(\lambda,m)$ and $(\mu,m)$ be charged partitions such that $\lambda$ is contained in $\mu$, and $\mu \backslash \lambda$ is a single box. Set
\begin{enumerate}
\item $A_i |\lambda,h \rangle:= \{ c_i \text{ colored boxes } n |\; \lambda \cup n \text{ is a partition} \}.$

\item $R_i|\lambda,h \rangle:= \{c_i \text{ colored boxes } n | \; \lambda \backslash n \text{ is a partition} \}.$

\item $N_i^a(\mu \backslash \lambda):= | \{ n \in A_i(\lambda) |  $n$ \text{ is to the left of } \mu \backslash \lambda \}|
-
 | \{ n \in R_i(\lambda) | \; $n$ \text{ is to the left of } \mu \backslash \lambda \}|
.$

\item $N_i^r(\mu \backslash \lambda):= | \{ n \in A_i(\lambda) |  $n$ \text{ is to the right of  } \mu \backslash \lambda \}|
-
 | \{ n \in R_i(\lambda) | \; $n$ \text{ is to the right of } \mu \backslash \lambda \}|
.$

\end{enumerate}
\end{Definition}

\begin{Theorem} (See \cite[Theorem 10.6]{Ari}) \label{MM_Fock_th}
There is an action of $U_q(\asl_\ell)$ on $\Fock_q$ defined by
\begin{align} \label{eq:Fqa}
\tilde E_{\bar i}|\lambda,h \rangle &:= \sum_{\small \begin{array}{c} \lambda \backslash \mu \text{ is an} \\ i \text{ colored box } \end{array}} q^{-N_i^r(\lambda \backslash \mu )} |\mu,h \rangle & \tilde F_{\bar i}|\lambda,h \rangle &:= \sum_{\small \begin{array}{c} \mu \backslash \lambda \text{ is an} \\ i \text{ colored box } \end{array}} q^{N_i^a(\mu \backslash \lambda )} |\mu,h \rangle.
\end{align}
\end{Theorem}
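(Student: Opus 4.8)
The plan is to verify directly that the operators $\tilde E_{\bar i}$ and $\tilde F_{\bar i}$ of \eqref{eq:Fqa}, together with a compatible diagonal action of the Cartan generators, satisfy the defining relations of $U_q(\asl_\ell)$ in its Chevalley presentation. First I would supply the data the statement leaves implicit: for each residue $\bar i$, let $K_{\bar i}$ act on $|\lambda,h\rangle$ by multiplication by $q^{N_{\bar i}(\lambda)}$, where $N_{\bar i}(\lambda)$ is the number of addable minus the number of removable $c_i$-colored boxes of $\lambda$ (compare $A_i$, $R_i$ in Definition~\ref{MM_number}), and, if one wants the full algebra rather than the derived one, let $q^d$ act by the $c_{\bar 0}$-box count as in Proposition~\ref{asll_with_d}. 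Since $\lambda$ has only finitely many addable and removable boxes of each color, every generator produces a finite sum, so the action is well defined on $\Fock_q$.

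I would then check the relations in four groups of increasing difficulty. (i)~The torus relations $K_{\bar i}K_{\bar j}=K_{\bar j}K_{\bar i}$ are immediate, as all $K$'s are diagonal. (ii)~The relations $K_{\bar i}\tilde E_{\bar j}K_{\bar i}^{-1}=q^{a_{ij}}\tilde E_{\bar j}$ and $K_{\bar i}\tilde F_{\bar j}K_{\bar i}^{-1}=q^{-a_{ij}}\tilde F_{\bar j}$, where $(a_{ij})$ is the affine Cartan matrix, reduce to the local fact that adding a $c_j$-colored box changes $N_{\bar i}$ by exactly $-a_{ij}$; this is a short case-check on how an inserted box of color $j$ creates or destroys addable and removable $i$-boxes at the neighbouring positions. (iii)~For the commutator $[\tilde E_{\bar i},\tilde F_{\bar j}]=\delta_{\bar i,\bar j}(K_{\bar i}-K_{\bar i}^{-1})/(q-q^{-1})$: when $\bar i\neq\bar j$ the box one removes and the box one adds occupy distinct, essentially non-interacting positions, and I would show that $\tilde E_{\bar i}\tilde F_{\bar j}$ and $\tilde F_{\bar j}\tilde E_{\bar i}$ produce identical terms with identical $q$-weights, hence cancel. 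When $\bar i=\bar j$ the off-diagonal terms (add box $n$, remove box $n'\neq n$) cancel after matching $q$-powers, and the surviving diagonal terms telescope: reading addable and removable $i$-boxes from left to right, the weighted sum collapses to $(q^{N_{\bar i}}-q^{-N_{\bar i}})/(q-q^{-1})$.

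(iv)~The quantum Serre relations are the heart of the matter and where I expect the real work to lie. For $a_{ij}=0$ (non-adjacent colors) one must see that $\tilde E_{\bar i}$ and $\tilde E_{\bar j}$ commute, which is clear since neither the box insertions nor the relevant exponents interact. For $a_{ij}=-1$ (adjacent colors) one must verify
$$\tilde E_{\bar i}^2\tilde E_{\bar j}-(q+q^{-1})\tilde E_{\bar i}\tilde E_{\bar j}\tilde E_{\bar i}+\tilde E_{\bar j}\tilde E_{\bar i}^2=0$$
and its $\tilde F$-analogue. Here I would fix the target partition obtained by removing two $c_i$-boxes and one $c_j$-box, enumerate the orders in which these boxes are taken away, and show that the $q$-weights coming from the exponents $N_i^r$ conspire to produce the coefficients $1,-(q+q^{-1}),1$. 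The delicate point is that removing the first $c_i$-box can alter whether a neighbouring $c_j$-box is removable and can shift the exponent attached to the second $c_i$-box, so the bookkeeping of $q$-powers across three non-commuting steps is intricate; the affine node $\bar 0$ must be checked to behave identically under the wrap-around coloring, and the degenerate Cartan entries when $\ell=2$ force the higher Serre relation and a separate computation. The cleanest conceptual alternative, which I would fall back on if the direct enumeration becomes unwieldy, is to realise $\Fock_q$ inside a $q$-deformed semi-infinite wedge space built from the level-zero evaluation module of $U_q(\asl_\ell)$, where the representation axioms are inherited from the coproduct acting on a tensor power and one only needs compatibility of the action with the $q$-antisymmetrization relations, reducing the Serre identities to the known two-factor case.
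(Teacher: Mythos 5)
The first thing to note is that the paper does not prove this theorem at all: it is quoted verbatim from \cite[Theorem 10.6]{Ari}, and the surrounding text only observes that setting $q=1$ in \eqref{eq:Fqa} recovers the classical action \eqref{eq:Fa} of Proposition \ref{comb_sll}. So your proposal is being measured against a citation, not an internal argument. That said, your overall strategy --- supply the missing Cartan data by letting $K_{\bar i}$ act on $|\lambda,h\rangle$ by $q^{N_{\bar i}(\lambda)}$ with $N_{\bar i}=\#A_i-\#R_i$, let $q^d$ act via the $c_{\bar 0}$-box count, and verify the Chevalley--Serre presentation directly on the combinatorial basis --- is the natural one and is essentially what the cited sources (Ariki, going back to Misra--Miwa and Hayashi) carry out. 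Your key local fact in step (ii) is correct, including the degenerate case: adding a $c_j$-box converts one addable $c_j$-box into a removable one and creates or destroys one addable/removable box at each diagonally adjacent content, so $N_{\bar i}$ changes by exactly $-a_{ij}$, with $\ell=2$ (where $a_{ij}=-2$ off-diagonally) handled automatically because the two adjacent contents coincide mod $\ell$.

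The gaps are concentrated exactly where you say you ``expect the real work to lie,'' and they are genuine. First, in step (iii) the case $\bar i\neq\bar j$ is not a matter of ``essentially non-interacting positions'': when $\bar j=\bar i\pm 1$, removing an $\bar i$-box changes the count $N^a_j$ attached to the added box, and adding the $\bar j$-box changes the count $N^r_i$ attached to the removed box; the equality of exponents in $\tilde E_{\bar i}\tilde F_{\bar j}$ versus $\tilde F_{\bar j}\tilde E_{\bar i}$ is a compensation identity that must be checked by cases on the relative position of the two boxes, not an absence of interaction. Second, the diagonal telescoping in (iii) and, above all, the quantum Serre relations in (iv) are announced (``I would show\ldots'', ``the weighted sum collapses'', ``the $q$-weights conspire'') rather than carried out --- but those computations \emph{are} the theorem; the emergence of the coefficients $1,\,-(q+q^{-1}),\,1$ from the $N^r_i$ bookkeeping across three non-commuting removals is precisely what needs proof, and nothing in the proposal establishes it. Third, the fallback you name --- realizing $\Fock_q$ inside a $q$-deformed wedge space built from an evaluation module, so that the relations are inherited from the coproduct --- is a legitimate alternative route (it is the Kashiwara--Miwa--Stern style construction), but it carries its own nontrivial burden (stability of the $q$-antisymmetrization relations under the $U_q(\asl_\ell)$-action), so invoking it does not close the gap either. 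In short: right strategy, correct auxiliary facts, but the decisive verifications are deferred rather than done, which is presumably why the paper, consistent with its expository style, simply cites \cite{Ari}.
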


By comparing \eqref{eq:Fqa} with \eqref{eq:Fa}, when $q$ is set to 1 the $U_q(\asl_\ell)$ action on $\Fock_q$ becomes the $\asl_\ell$ action on $\Fock$ from Proposition \ref{comb_sll}. 
As in the undeformed case, each $F_q^{(h)}$ decomposes as a sum of infinitely many copies of the irreducible representation of highest weight $\Lambda_{\bar h}$.

\end{document}